\documentclass[twoside]{article}
\usepackage[english]{babel}
\usepackage[utf8]{inputenc}
\usepackage{color}
\usepackage[colorlinks=true,citecolor=green]{hyperref}
\usepackage{amsmath,amsfonts,amssymb,amsthm,mathtools}
\usepackage[margin=1in]{geometry}
\usepackage[margin=0.2in]{caption}
\usepackage{graphicx,subfig}
\usepackage{tikz}
\usepackage{authblk}
\usepackage{fancyhdr}
\newcommand{\orcid}[1]{\href{https://orcid.org/#1}{\includegraphics[width=10pt]{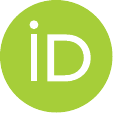}}}

\def\be{\begin{equation}}
	\def\ee{\end{equation}}
\def\bse{\begin{subequations}}
	\def\ese{\end{subequations}}

\newtheorem{theorem}{Theorem}[section]
\newtheorem{corollary}[theorem]{Corollary}
\newtheorem{lemma}[theorem]{Lemma}

\newtheorem{remark}[theorem]{Remark}

\numberwithin{equation}{section}

\def\N{\mathbb{N}}
\def\R{\mathbb{R}}
\def\C{\mathbb{C}}

\def\Z{\mathbb{Z}}

\def\Re{\mathop{\rm Re}\nolimits}
\def\Im{\mathop{\rm Im}\nolimits}

\def\@#1{{\mathbf{#1}}}
\def\_#1{{\mathsf{#1}}}
\let\~=\tilde
\let\==\overline
\let\^=\hat
\let\<=\langle
\let\>=\rangle

\def\max{\mathop{\rm max}\nolimits}

\def\p{\partial}

\def\T{\mathbb{T}}

\def\1{{\bf 1}}

\makeatother

%%%%%%%%%%%%%%%%%%%%%%%%%%%%%%%%%%%%%%%%%%%%%%%%%%%%%%%%%%%%%%%%%%%%%%%%%%%%%%%%%%%%%%%%%%%%%%%%%%%%%%%%%%%%%%%%%%%%%%%%%%%%%%%%%%%%%%%%%%%%%%%%%%%%%%%%%%%%%%%%%%%%

\title{\textbf{Spectral enclosure estimates for non-self-adjoint Dirac operators}}

\author{\Large Jeffrey Oregero\orcid{0000-0003-1576-8867}\footnote{E-mail address:~\href{mailto:oregero@ku.edu}{oregero@ku.edu}
}}
\affil{\normalsize Department of Mathematics, University of Kansas\\
Lawrence, KS 66045, USA}

\date{}

\pagestyle{fancy}
\fancyhf{} 
\fancyhead[RO,LE]{\small \thepage}
\fancyhead[CO]{\small Spectral enclosure estimates for non-self-adjoint Dirac operators}
\fancyhead[CE]{\small Jeffrey Oregero}

%%%%%%%%%%%%%%%%%%%%%%%%%%%%%%%%%%%%%%%%%%%%%%%%%%%%%%%%%%%%%%%%%%%%%%%%%%%%%%%%%%%%%%%%%%%%%%%%%%%%%%%%%%%%%%%%%%%%%%%%%%%%%%%%%%%%%%%%%%%%%%%%%%%%%%%%%%%%%%%%%%%%%%%%%%%%%%%%%%%%%%

\begin{document}
\maketitle

\noindent\small{2020~\textit{Mathematics Subject Classification.} Primary 34L40, 47A10, Secondary 35Q51}
\\
\noindent\small{\textit{Keywords and phrases.} Non-self-adjoint Dirac operators, semiclassical limit, integrable hierarchies}
\vspace{4mm}
	
\begin{abstract}
We study the spectrum of a periodic non-self-adjoint Dirac operator, and its dependence on a semiclassical parameter is also considered. Several bounds on the spectrum are obtained which provide sharp spectral enclosure estimates. Importantly, a sufficient condition is obtained which ensures semiclassical spectral confinement to the real axis and a bounded subset of the imaginary axis of the spectral plane.
\end{abstract}
	
%\tableofcontents 

\section{Introduction}
In this work we study the non-self-adjoint Dirac operator
\be
\label{e:dtype}
\mathfrak{D}:= ih\sigma_3\p_x + Q(x), \quad x\in\R,
\ee  
where
\be
\sigma_3 = \begin{pmatrix} 1&0 \\ 0&-1 \end{pmatrix}, \qquad Q(x)=\begin{pmatrix} 0&-iq(x) \\ ip(x)&0 \end{pmatrix}.
\ee
Here $Q:\R\to \C^{2\times 2}$ is a potential, $h>0$ the semiclassical parameter, and $i$ the imaginary unit. We assume $p$, $q$ are $1$-periodic complex-valued functions. Moreover, unless stated otherwise $p$, $q\in L_{\rm loc}^{1}(\R)$, the space of locally Lebesgue integrable functions. Letting $h\to 0^+$ is referred to as the \textit{semiclassical limit}. 

Our aim is to derive sharp spectral enclosure estimates of such operators in the space $L^2(\R)^2$ of $2$-component complex vector functions $\Psi(x;h)=(\psi_1(x;h),\psi_2(x;h))^{T}$ (``$T$'' denoting transpose) with the norm
\be
\label{e:L2vecnorm}
\|\Psi\|_{L^2(\R)^2} = \left(\int_{-\infty}^{\infty} \left(|\psi_1(x;h)|^2 + |\psi_2(x;h)|^2\right)\,dx\right)^{1/2}. 
\ee
Under the above assumptions we can uniquely associate the densely and maximally defined closed linear operator $H$ in $L^2(\R)^2$ with the differential expression \eqref{e:dtype}, i.e.,
\be
\label{e:operator}
H\Psi = \mathfrak{D}\Psi, \quad {\rm dom}(H) = \left\{\Psi\in L^2(\R)^2: \Psi\in AC_{\rm loc}(\R)^2,\, H\Psi\in L^2(\R)^2\right\}.
\ee
where $AC_{\rm loc}(\R)^2$ is the space of locally absolutely continuous 2-component complex vector functions (cf.~\cite[Chapter~4]{BES2013}).

Dirac operators are fundamental objects in mathematical physics finding applications in relativistic quantum mechanics, nonlinear optics, and material science. For a thorough introduction to the Dirac equation see \cite{Thaller} and references therein. 

A major motivation for studying \eqref{e:dtype} is due to its connection with hierarchies of completely integrable nonlinear evolution equations in $1+1$ dimensions. Introducing a deformation parameter $(p(x),q(x))\mapsto (p(x,t),q(x,t))$ the AKNS-hierarchy of nonlinear evolution equations is then constructed via appropriate Lax pairs $(P_{n+1}(t),\mathfrak{D}(t))$, where $P_{n+1}(t)$ is a $2\times 2$ matrix-valued differential expression of order $n+1$, $n\in\N_o:=\N\cup\{0\}$. The collection of Lax equations
\be
\label{e:Laxeq}
{\rm AKNS}_n(p,q;h) := \frac{d}{dt}\mathfrak{D}(t) - [P_{n+1}(t),\mathfrak{D}(t)] = 0,
\ee
where $[\cdot,\cdot]$ denotes the commutator, then defines the AKNS-hierarchy for $(p(x,t),q(x,t))$ (cf.~\cite{GW_acta1998} for an explicit construction of $P_{n+1}$). Importantly, time deformations which satisfy \eqref{e:Laxeq} form isospectral level sets in an appropriate function space. Many equations of fundamental physical importance such as the Korteweg-de Vries (KdV), modified Korteweg-de Vries (mKdV), nonlinear Schr\"odinger (NLS), and sine-Gordon (sG) equations are members of the AKNS-hierarchy.   

The AKNS-hierarchy is named after Ablowitz, Kaup, Newell, and Segur who in a seminal work in 1974 introduced a new system of nonlinear evolution equations, the AKNS-system (cf. \cite{AKNS} for details)
\be
\label{e:aknssys}
{\rm AKNS}_1(p,q;h) = \begin{pmatrix} ih \p_tp - \frac{h^2}{2}\p_x^2p + p^2q \\ ih \p_tq + \frac{h^2}{2}\p_x^2q - q^2p \end{pmatrix} = 0.
\ee
Importantly, the authors showed that the AKNS-system \eqref{e:aknssys} admits a Lax pair and is thus formally integrable using the inverse scattering transform (IST) \cite{APT2004,AKNS,BBEIM,NMPZ1984}.

\begin{remark}
Suppose that $p(x,t)=\pm \overline{q(x,t)}$, where overbar denotes complex conjugation. In this case the AKNS-hierarchy reduces to the nonlinear Schr\"odinger (NS)-hierarchy. In particular, the AKNS-system reduces to the cubic nonlinear Schr\"odinger equation, namely, 
\be
\label{e:nls}
{\rm NLS}_{\pm}(q;h) := ih\p_tq + \frac{h^2}{2}\p_x^2q \mp |q|^2q = 0,
\ee
$(``\pm'' defocusing/focusing, respectively)$ where $q:\R\times \R\to \C$ is the slowly-varying complex envelope of a weakly dispersive nonlinear wave packet, and the physical meaning of the variables depends on the context. (E.g., in nonlinear fiber optics, $t$ represents propagation distance while $x$ is a retarded time.) See, for example, \cite{ADK,AA,FT1987,HK,mclaughlinoverman,solli}. In general, the parameter $h$ quantifies the relative strength of dispersion compared to nonlinearity. (In the quantum-mechanical setting $h$ is proportional to Planck's constant $\hbar$.) Letting $h\to 0^+$ in \eqref{e:nls} is referred to as the ``semiclassical'', or ``zero-dispersion'' limit. These situations can produce a wide variety of physical effects such as supercontinuum generation, dispersive shocks, wave turbulence, and soliton gases, to name a few (e.g., see \cite{BELOT,DT,EH,eltovbisSG,RWOP,TW,ZabuskyKruskal,Z} and references therein). Moreover, see \cite{bertolatovbis,BLOT,BiondiniOregero,pre2017deng,ForestLee,fujiiewittsten2018,JM,JLM,KMM2003,miller2001,TVZ2004} and references therein. 
\end{remark}

An important class of equations associated with the AKNS-hierarchy are the stationary AKNS-equations, characterized by
\be
\label{e:stationakns}
[P_{n+1}(t),\mathfrak{D}(t)] = 0.
\ee
Being commuting ordinary differential operators such a pair implies an algebraic relationship between $P_{n+1}$ and $\mathfrak{D}$ of the type
\be
P_{n+1}^2 = \overset{2n+1}{\underset{j=0}\Pi}(\mathfrak{D}-z_j),
\ee
where $\{z_j\}_{j=0}^{2n+1}\subset\C$ (cf.~\cite[Chapter~3]{gesztesyholden2003}). In particular, they define a hyper-elliptic curve $\mathcal{K}_n$ of (arithmetic) genus $n$, i.e.,
\be
\mathcal{K}_n: \quad w^2 = \overset{2n+1}{\underset{j=0}\Pi}(z-z_j), \quad \{z_j\}_{j=0}^{2n+1}\subset \C,
\ee
and one says $(p,q)$ is an \textit{algebro-geometric} AKNS-solution (cf.~\cite[Chapter~3]{gesztesyholden2003}, and references therein). This establishes an important connection between integrable nonlinear dispersive equations and function theory on Riemann surfaces. 
 
Remarkably, the algebro-geometric AKNS-solutions can be obtained via the IST and expressed in terms of Riemann-theta functions \cite{BBEIM,gesztesyholden2003}. Importantly, Gesztesy and Weikard gave a characterization of all elliptic algebro-geometric AKNS solutions \cite{GW_acta1998, gesztesyweikard_bams1998}. Specifically, elliptic algebro-geometric AKNS solutions correspond to the potentials $(p,q)$ for which the spectral problem $\mathfrak{D}\Psi=z\Psi$ admits a meromorphic fundamental matrix in $x$ for all values of the spectral parameter $z\in\C$.

Additionally, the tools of integrability have recently been applied to curved space-time theories and general relativity. In a recent work it was shown that the AKNS-hierarchy admits applications in general relativity. In particular, it was shown that the AKNS-hierarchy can be obtained as the dynamical system of three-dimensional general relativity with a negative cosmological constant (see \cite{CCLP} for details). 

Importantly, since \eqref{e:dtype} is non-self-adjoint, its spectrum is in general quite difficult to study. Indeed much of the research in this area has been devoted to studying special cases such as exactly-solvable models, potentials with purely imaginary eigenvalues, and algebro-geometric potentials (e.g., see \cite{BLOT,BJ,BM,DM,GW_acta1998,K,KS2002,TV}). Another area of research is the study of \eqref{e:dtype} with the NLS reductions $p=\pm\overline{q}$ in certain asymptotic settings such as the semiclassical limit \cite{BiondiniOregero,bronski,pre2017deng,fujiiewittsten2018,FHK,HK,miller2001}. For general studies of \eqref{e:dtype} see \cite{djakovmityagin,kapmity,kappeler,rofebek,tkachenko} and references therein.
Finally, the study of spectral enclosure estimates for non-self-adjoint operators arising in mathematical physics has a rich history \cite{BOT,bronski,Buckingham,Difranco,Shin} with important applications to the theory of integrable systems, and to the numerical analysis of non-self-adjoint operators.  

\begin{remark}
One can easily verify that $\mathfrak{D}$ is unitarily equivalent to
\be
h\begin{pmatrix} 0 & -1 \\ 1 & 0 \end{pmatrix}\p_x + \frac{1}{2}\begin{pmatrix} (p+q) & i(p-q) \\ i(p-q) & -(p+q) \end{pmatrix}, 
\ee
a form frequently used in the literature.
\end{remark}

In this work we study the $L^2(\R)^2$-spectrum of the periodic non-self-adjoint operator \eqref{e:dtype}. The main results are several new bounds which together enclose the spectrum of \eqref{e:dtype}. Importantly, a sufficient condition for semiclassical spectral confinement to the real axis and a bounded subset of the imaginary axis of the spectral plane is obtained. Finally, an example is considered where the sufficient condition is not satisfied and it is shown in this case that the spectrum does not confine to the real and imaginary axes of the spectral plane in the semiclassical limit $h\to 0^+$. Thus, an interesting open question is whether the sufficient condition is also necessary.
	
\section{Floquet theory}
\label{s:floqtheory}

It is a fortunate fact that the spectral theory of periodic differential operators 
follows from the theory of linear homogeneous ordinary differential equations (ODEs) with periodic coefficients (e.g., see \cite{BES2013}, and \cite[p.~249]{rofebek2}). For a functional-analytic approach see also \cite[Chapter~XIII]{RSIV}.

We consider the spectral problem
\be
\label{e:specprob1}
\mathfrak{D}\Psi = z\Psi, \quad z\in\C.
\ee
From \eqref{e:dtype} it follows that \eqref{e:specprob1} can be rewritten as the singularly perturbed $2\times 2$ first-order system of ODEs with $1$-periodic coefficients, namely, 
\be
\label{e:persys}
h \Psi'(x;z,h) = A(x;z)\Psi(x;z,h), \quad x\in\R, 
\ee
where
\be
A(x;z):= \begin{pmatrix} -iz & q(x) \\ p(x) & iz \end{pmatrix},
\ee
$z\in\C$ is the spectral parameter, $``\,'\,"$ denotes differentiation with respect to the independent variable $x$, and $h>0$ is the semiclassical parameter. We say $\Psi(x;z,h)$ is a solution of \eqref{e:persys}, if $\Psi\in AC_{\rm loc}(\R)^2$ and the equality \eqref{e:persys} holds almost everywhere (a.e.) \cite[Chapter~1]{BES2013}.

Basic properties of the spectrum, denoted $\sigma(\mathfrak{D})$, are reviewed next in order to introduce relevant concepts and notations. 

\begin{theorem}
\label{t:floquet}
(\cite[Chapter~1]{BES2013},\cite{Floquet})
Consider the system of linear homogeneous ODEs
\be
\label{e:floquetODE}
Y' = A(x)Y, \quad x\in\R, 
\ee
where $A:\R\to\C^{n\times n}$ is a locally integrable $n\times n$ matrix-valued function with $A(x+\tau)=A(x)$ for any $x\in\R$. Then any fundamental matrix solution $Y(x)$ of \eqref{e:floquetODE} can be written in the Floquet normal form
\be
\label{e:normalform}
Y(x) = X(x)e^{xR},
\ee 
where $X:\R\to\C^{n\times n}$ is $\tau$-periodic and nonsingular, and $R$ is a $n\times n$ constant matrix. 
\end{theorem}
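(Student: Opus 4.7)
The plan is to produce the Floquet normal form by identifying a constant monodromy matrix associated with $Y$, taking its matrix logarithm, and then factoring out the resulting exponential growth. Concretely, I will proceed in three steps: (i) construct the monodromy matrix, (ii) define $R$ via a matrix logarithm, and (iii) verify that $X(x):=Y(x)e^{-xR}$ has the required properties.

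First, existence and uniqueness of a fundamental matrix solution under the hypothesis $A\in L^1_{\rm loc}(\R,\C^{n\times n})$ follows from Carath\'eodory theory for linear systems, yielding $Y\in AC_{\rm loc}(\R,\C^{n\times n})$ with $\det Y(x)\neq 0$ for all $x$. The key observation is that, because $A$ is $\tau$-periodic, the shifted matrix $\tilde Y(x):=Y(x+\tau)$ also satisfies $\tilde Y'(x)=A(x+\tau)\tilde Y(x)=A(x)\tilde Y(x)$ a.e., so $\tilde Y$ is again a fundamental matrix solution. Since any two fundamental matrix solutions of a linear homogeneous system differ by right-multiplication by a constant nonsingular matrix, there exists a unique $M\in GL_n(\C)$ (the \emph{monodromy matrix}) such that
\[
Y(x+\tau)=Y(x)M,\qquad x\in\R.
\]

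Next, I would invoke the fact that every invertible complex matrix admits a logarithm (via the Jordan canonical form, or equivalently via the holomorphic functional calculus applied to a branch of $\log z$ on a neighborhood of the spectrum of $M$, which avoids $0$). Thus there exists a constant $R\in\C^{n\times n}$ with $e^{\tau R}=M$. Now define $X(x):=Y(x)e^{-xR}$. Then $X\in AC_{\rm loc}(\R,\C^{n\times n})$ and
\[
X(x+\tau)=Y(x+\tau)e^{-(x+\tau)R}=Y(x)M\,e^{-\tau R}e^{-xR}=Y(x)e^{-xR}=X(x),
\]
using that $M$ and $e^{-\tau R}$ commute and satisfy $Me^{-\tau R}=I$. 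Nonsingularity of $X(x)$ is automatic because $Y(x)$ and $e^{-xR}$ are invertible. Rearranging yields the desired Floquet normal form $Y(x)=X(x)e^{xR}$.

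The only genuinely nontrivial point is the existence of the matrix logarithm used to define $R$, which is the step where complex-valued (rather than real-valued) coefficients are essential; over $\R$ one would in general only be able to find such an $R$ after passing from $\tau$ to $2\tau$. Everything else is a direct computation once the monodromy matrix has been identified, so I would expect the write-up to be short.
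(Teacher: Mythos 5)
Your argument is correct and is exactly the standard Floquet construction: monodromy matrix $M=Y(x)^{-1}Y(x+\tau)$, a matrix logarithm $e^{\tau R}=M$ (available over $\C$), and the verification that $X(x)=Y(x)e^{-xR}$ is $\tau$-periodic and nonsingular. The paper does not prove this theorem itself but cites \cite[Chapter~1]{BES2013} and \cite{Floquet}, whose proof is the same argument you give, so there is nothing to add.
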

Without loss of generality one can assume $R$ is in the Jordan normal form. Since $X\in AC_{\rm loc}(\R,\C^{n\times n})$ and $X(x+\tau)=X(x)$, the behavior of solutions as $x\to\pm\infty$ is determined by the eigenvalues, called Floquet exponents, of the constant matrix $R$. In particular: (i) If the Floquet exponent has non-zero real part then solutions grow exponentially as $x\to\infty$, or as $x\to-\infty$; (ii) If the Floquet exponent has zero real part, but $R$ has non-trivial Jordan blocks then the solution is algebraically growing; (iii) Otherwise the Floquet exponent is purely imaginary and the solution remains bounded for all $x\in\R$. 

Consider the $2\times 2$ system given by \eqref{e:persys}. Moreover, let $Y:\R\to\C^{2\times 2}$ be a fundamental matrix of \eqref{e:persys}, that is,
\be
\label{e:ivp}
h Y'(x;z,h) = A(x;z) Y(x;z,h), \quad Y(0;z,h)= Y_o,
\ee
where $Y_o$ is constant. According to Theorem~\ref{t:floquet} it follows that
\[
Y(x+1;z,h) = Y(x;z,h)e^{R(z;h)}.
\]
Hence there exists a non-singular matrix $M\in\C^{2\times 2}$, called a monodromy matrix satisfying
\be
\label{e:monodromy}
M(z;h) = e^{R(z;h)}.
\ee
Importantly, all monodromy matrices are similar. 
Moreover, since \eqref{e:persys} is traceless Abel's theorem implies $\det M(z;h)= 1$. Hence, the eigenvalues of $M(z;h)$ are given by
\be
\label{e:evalues}
\varrho_{1,2} = \Delta \pm \sqrt{\Delta^2-1},
\ee 
where 
\be
\Delta(z;h) := {\rm tr}M(z;h)/2 
\ee
is the \textit{Floquet discriminant} (``${\rm tr}$'' denoting matrix trace). Further, from \eqref{e:evalues} we have $\varrho_1\varrho_2=1$.

In order to describe $\sigma(\mathfrak{D})$ we introduce the \textit{conditional stability set} $\mathcal{S}$,
\be
\label{e:stability}
\mathcal{S} := \Big\{z\in\C: \Delta(z;h) \in [-1,1]\Big\},
\ee
which corresponds to the set of $z\in\C$ such that $\mathfrak{D}\Psi(\cdot;z,h)=z\Psi(\cdot;z,h)$ has at least one non-trivial bounded solution $\Psi(\cdot;z,h)$ on $\R$. Moreover, it follows from Theorem~\ref{t:floquet} that non-trivial solutions of the spectral problem \eqref{e:specprob1} cannot be square-integrable\footnote{In fact, such solutions can not have finite norm in $L^p(\R)^2$ for any $1\le p<\infty$.} over $\R$ and, at best, can be bounded functions on $\R$. Further, Theorem~\ref{t:floquet} and \eqref{e:evalues} together imply that $z\in\mathcal{S}$ if and only if there exists $\xi\in(-\pi,\pi]$\footnote{Note that $i\xi$ is a Floquet exponent} such that
\be
\label{e:boundedsoln}
\Psi(x;z,h) = e^{i\xi x}\Phi(x;z,h), \quad \Phi(x+1;z,h)=\Phi(x;z,h),
\ee
where $\Phi\in L^2(\T)^2$, and $\T:=\R/\Z$ is the circle of length one. 

Next, substituting \eqref{e:boundedsoln} into \eqref{e:specprob1}, it follows that $z\in\mathcal{S}$ if and only if there exists a $\Phi\in L^2(\T)^2$ and a $\xi\in(-\pi,\pi]$ such that
\be
\label{e:specprobxi}
\mathfrak{D}_{\xi}\Phi = z\Phi,
\ee
where
\be
\label{e:blochD}
\mathfrak{D}_{\xi}: H^1(\T)^2\subset L^2(\T)^2\to L^2(\T)^2, \qquad \mathfrak{D}_{\xi} := e^{-i\xi x}\mathfrak{D}e^{i\xi x}.
\ee
($H^k$ the Sobolev space with $k$ distributional derivatives in $L^2$.)

The one-parameter family of operators $\mathfrak{D}_{\xi}$ are called \textit{Bloch operators} associated with $\mathfrak{D}$, and $\xi$ is referred to as the \textit{Bloch frequency}. Since the Bloch operators have compactly embedded domains in $L^2(\T)^2$, for each $\xi\in(-\pi,\pi]$ it follows that the $L^2(\T)^2$-spectrum of $\mathfrak{D}_{\xi}$, denoted $\sigma_{L^2(\T)^2}(\mathfrak{D}_{\xi})$, consists entirely of isolated eigenvalues with finite algebraic multiplicities. Thus, $\sigma(\mathfrak{D})$ admits a continuous parameterization by a one-parameter family of $1$-periodic eigenvalue problems for the associated Bloch operators $\{\mathfrak{D}_{\xi}\}_{\xi\in(-\pi,\pi]}$.

We have the following important result regarding differential operators with periodic coefficients.
\begin{theorem}
\label{t:spectrum}
Let $\mathfrak{D}$ defined by \eqref{e:dtype} be a closed linear operator generated in the space $L^2(\R)^2$. Then the spectrum of $\mathfrak{D}$
\begin{itemize}
\item[(i)] is purely continuous, i.e., essential without any eigenvalues and without residual spectrum.
\item[(ii)] coincides with the conditional stability set $\mathcal{S}$.
\item[(iii)] consists of an infinite number (in general) or finite number of analytic arcs.
\end{itemize}
\end{theorem}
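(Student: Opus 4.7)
The plan is to handle the three parts of Theorem~\ref{t:spectrum} in turn, relying throughout on Theorem~\ref{t:floquet} and the monodromy/Floquet discriminant framework already in place. The central observation is that every non-trivial solution of $\mathfrak{D}\Psi = z\Psi$ decomposes via Theorem~\ref{t:floquet}, so its pointwise behavior is governed entirely by the Floquet multipliers $\varrho_{1,2}$ of the monodromy matrix $M(z;h)$.

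For part~(i), I would first rule out $L^2$-eigenvalues. Any eigenfunction of $H$ is in particular a bounded solution of \eqref{e:specprob1}, hence admits the Bloch representation \eqref{e:boundedsoln} with $\Phi\in L^2(\T,\C^2)$ one-periodic and non-trivial. Because $\int_\R |\Psi|^2\,dx = \sum_{n\in\Z}\|\Phi\|_{L^2(\T,\C^2)}^2 = +\infty$ unless $\Phi\equiv 0$, no eigenvalue can lie in $L^2(\R,\C^2)$. To exclude residual spectrum I would apply the same argument to the formal adjoint $\mathfrak{D}^*$, which is again a Dirac-type operator of the form \eqref{e:dtype} (with the roles of $p$, $q$ swapped and complex conjugated) and hence has the same Floquet structure; absence of eigenvalues of the adjoint then forces absence of residual spectrum of $\mathfrak{D}$.

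For part~(ii), I would prove the two inclusions separately. If $z\notin\mathcal{S}$, then $|\Delta(z;h)|>1$, so by \eqref{e:evalues} the multipliers satisfy $|\varrho_1|<1<|\varrho_2|$; the corresponding Floquet solutions decay exponentially at $+\infty$ and $-\infty$ respectively, and I would use variation of parameters to construct a Green's function and verify that the resulting resolvent is bounded on $L^2(\R,\C^2)$. Conversely, if $z\in\mathcal{S}$, I would build a singular Weyl sequence by truncating the bounded Bloch solution \eqref{e:boundedsoln} with smooth cutoffs $\chi_n$ supported in $[-n,n]$: since $(\mathfrak{D}-z)(\chi_n\Psi_z)$ is supported in an $O(1)$ boundary layer while $\|\chi_n\Psi_z\|_{L^2}\sim \sqrt{n}\,\|\Phi\|_{L^2(\T,\C^2)}$, the ratio $\|(\mathfrak{D}-z)(\chi_n\Psi_z)\|_{L^2}/\|\chi_n\Psi_z\|_{L^2}\to 0$, placing $z\in\sigma(\mathfrak{D})$.

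For part~(iii), I would use that, for fixed $h>0$, the Floquet discriminant $\Delta(\cdot;h)$ is an entire function of $z$, inherited from the analytic dependence of the solution of the initial value problem \eqref{e:ivp} on the parameter $z$. Then $\sigma(\mathfrak{D}) = \Delta^{-1}([-1,1])$; at points where $\Delta'(z;h)\neq 0$ the implicit function theorem shows this preimage is locally an analytic arc, while at the isolated zeros of the entire function $\Delta'(\cdot;h)$ a standard Puiseux/ramification analysis handles the local branching. The main obstacle I anticipate is global in nature: establishing that at most countably many arcs arise and that only finitely many can accumulate in any bounded region will require a growth estimate on $\Delta(z;h)$ as $|z|\to\infty$, which in turn rests on the large-$|z|$ asymptotics of the Floquet solutions of \eqref{e:persys} — a technical input that the paper will presumably supply in the sequel.
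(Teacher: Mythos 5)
The paper does not actually prove Theorem~\ref{t:spectrum}; it defers entirely to \cite{johnson,rofebek} and \cite[p.~249]{rofebek2}. So your proposal is being measured against the classical arguments of those references, and in outline it follows them: absence of point/residual spectrum via Floquet structure and the adjoint, the identity $\sigma(\mathfrak{D})=\mathcal{S}$ via a Green's function on one side and a Weyl sequence on the other, and the arc structure via analyticity of $\Delta(\cdot;h)$. Part (ii) of your plan is sound: the equivalence $\Delta\in[-1,1]\iff|\varrho_1|=|\varrho_2|=1$ follows from $\varrho_1\varrho_2=1$ and \eqref{e:evalues}, and the truncation computation $\|\chi_n\Psi_z\|_{L^2}\sim\sqrt{n}$ versus $\|(\mathfrak{D}-z)(\chi_n\Psi_z)\|_{L^2}=O(1)$ is correct since $|\Psi_z|=|\Phi|$ is periodic.

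There are two genuine soft spots. First, in part (i) the opening claim that ``any eigenfunction of $H$ is in particular a bounded solution of \eqref{e:specprob1}, hence admits the Bloch representation \eqref{e:boundedsoln}'' is not a valid step: an $L^2(\R,\C^2)$ function need not be bounded a priori, and for $z\notin\mathcal{S}$ there are no bounded nontrivial solutions at all, so the premise is vacuous exactly where you need it. The correct route is through Theorem~\ref{t:floquet} directly: every solution is a linear combination of (generalized) Floquet solutions, and since $\varrho_1\varrho_2=1$ either both multipliers lie on the unit circle (so $\int_n^{n+1}|\Psi|^2\,dx$ is bounded below, or grows, in the Jordan-block case) or one lies strictly inside and one strictly outside (so each basis solution blows up exponentially at one end). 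In neither case can a nontrivial solution decay at both $\pm\infty$, which is what actually kills the point spectrum; your divergent-sum computation only handles the special case where the candidate eigenfunction happens to be a single Bloch solution. Second, part (iii) as written is a plan rather than a proof: the local implicit-function-theorem/Puiseux analysis is fine, but the global statement --- that $\Delta^{-1}([-1,1])$ is a locally finite union of analytic arcs --- is the hard content of Rofe-Beketov's theorem and does require the large-$|z|$ asymptotics of $\Delta$ (essentially $\Delta\sim\cos(z/h)$) that you correctly identify as missing; you should also note that $\Delta$ is nonconstant (which these asymptotics give you) before asserting that the zeros of $\Delta'$ are isolated. Neither issue is fatal, but both need to be repaired before the argument is complete.
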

Moreover, it follows that
\be
\sigma(\mathfrak{D}) = \bigcup_{\xi\in(-\pi,\pi]} \sigma_{L^2(\T)^2}(\mathfrak{D}_{\xi}) = \mathcal{S}.
\ee
See \cite{johnson,rofebek} for proofs of this result, and \cite[p.~249]{rofebek2} for a detailed discussion. 

\section{Symmetries}
\label{s:symmetries}

In this section we derive various symmetries for solutions of \eqref{e:persys}. In turn, these results imply certain symmetries in the spectrum of \eqref{e:dtype}.

Let $Y(\cdot;z,h)$ be the canonical fundamental matrix solution of \eqref{e:ivp}, i.e., $Y$ is a fundamental matrix with $Y(0;z,h)= I$ ($I$ denotes the unit $2\times 2$ matrix). Fix $h>0$. First, assume that $p$, $q$ are real. Then 
\be
\label{e:real}
Y_{(r)}(x;z,h) \equiv \overline{Y(x;-\overline{z},h)}
\ee
also is a solution of \eqref{e:ivp}. Next, assume $p$, $q$ are even. Then
\be
\label{e:even}  
Y_{(e)}(x;z,h)\equiv \sigma_3Y(-x;-z,h)
\ee
also is a solution of \eqref{e:ivp}. Finally, assume $p$, $q$ are odd. Then
\be
\label{e:odd}
Y_{(o)}(x;z,h)\equiv Y(-x;-z,h)
\ee
also is a solution of \eqref{e:ivp}.

Next, since the monodromy matrix satisfies $Y(x+1;z,h)=Y(x;z,h)M(z;h)$ it follows that if $p$, $q$ are real, then the monodromy matrix admits the symmetry
\be
M(z;h)= \overline{M(-\overline{z};h)}.
\ee
Similarly, if $p$, $q$ are even, or odd, then the monodromy matrix satisfies the symmetries
\bse
\begin{align}
M(z;h)&= \sigma_3M(-z;h)^{-1}\sigma_3,\\
M(z;h)&= M(-z;h)^{-1},
\end{align}
\ese
respectively.

The above symmetries imply the following result.
\begin{theorem}
If $p$ and $q$ are real, then $\sigma(\mathfrak{D})$ admits a $z\mapsto-\overline{z}$ symmetry. If $p$ and $q$ are even, or odd, then $\sigma(\mathfrak{D})$ admits a $z\mapsto-z$ symmetry. Finally, if $p$ and $q$ are either real and even, or real and odd, then $\sigma(\mathfrak{D})$ is symmetric with respect to both the real and imaginary axes of the spectral variable.  
\end{theorem}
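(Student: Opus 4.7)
The plan is to reduce the statement to symmetries of the Floquet discriminant $\Delta(z;h) = \tr M(z;h)/2$, and then invoke Theorem \ref{t:spectrum}, which identifies $\sigma(\mathfrak{D})$ with the conditional stability set $\mathcal{S} = \{z\in\C : \Delta(z;h)\in[-1,1]\}$. Since $\mathcal{S}$ is determined entirely by $\Delta$, any symmetry of $\Delta$ transfers automatically to $\sigma(\mathfrak{D})$.

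First, I would take the trace of each of the three monodromy identities displayed immediately before the statement. In the real case the identity $M(z;h) = \overline{M(-\overline{z};h)}$ gives at once $\Delta(z;h) = \overline{\Delta(-\overline{z};h)}$. In the even case the identity $M(z;h) = \sigma_3 M(-z;h)^{-1}\sigma_3$ combined with the cyclic property of the trace and $\sigma_3^2 = I$ yields $\tr M(z;h) = \tr M(-z;h)^{-1}$; and since $\det M = 1$ (by Abel's theorem, as noted in Section \ref{s:floqtheory}), one has $\tr M^{-1} = \tr M$, so $\Delta(z;h) = \Delta(-z;h)$. The odd case $M(z;h) = M(-z;h)^{-1}$ gives the same conclusion $\Delta(z;h) = \Delta(-z;h)$ even more directly.

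Next I would read off the spectral symmetries. Because the interval $[-1,1]\subset\R$ is invariant under complex conjugation, the identity $\Delta(z;h) = \overline{\Delta(-\overline{z};h)}$ implies $z\in\mathcal{S}$ if and only if $-\overline{z}\in\mathcal{S}$, giving the real case. The identity $\Delta(z;h) = \Delta(-z;h)$ gives the $z\mapsto -z$ symmetry in the even and odd cases. For the combined cases (real and even, or real and odd), both identities hold, and composing them shows that $z\in\mathcal{S}$ if and only if $\overline{z}\in\mathcal{S}$ as well; together with $z\mapsto -z$ this generates the full symmetry group of reflections across the real and imaginary axes.

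The only real point requiring care is the algebraic identity $\tr M^{-1} = \tr M$ used in the even case, which rests on $\det M = 1$; everything else is a short diagram chase through the monodromy symmetries already established in Section \ref{s:symmetries}. I do not anticipate any genuine obstacle, as Theorem \ref{t:spectrum}(ii) reduces the spectral question to a single scalar function $\Delta$, and the monodromy-level symmetries have essentially done the work already.
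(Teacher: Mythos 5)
Your proposal is correct and follows exactly the route the paper intends: the paper gives no explicit proof beyond the remark that ``the above symmetries imply the following result,'' and your argument---passing the monodromy identities to the trace, using $\det M=1$ to get $\tr M^{-1}=\tr M$ in the even/odd cases, and transferring the resulting symmetries of $\Delta$ to $\sigma(\mathfrak{D})=\mathcal{S}$ via Theorem~\ref{t:spectrum}(ii)---is precisely the intended completion of that remark.
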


\section{Spectral bounds}
\label{s:estimates}

Next, using simple energy techniques together with Theorem~\ref{t:spectrum} we derive bounds on the $L^2(\R)^2$-spectrum of $\mathfrak{D}$. Importantly, we will see that one of these estimates depends explicitly on the semiclassical parameter $h$. For $z\in\C$ we denote $\Re z$, and $\Im z$, the real and imaginary components of a complex number, respectively. Finally, $L^{\infty}(\R)$ denotes the space of essentially bounded Lebesgue measurable functions.

\begin{lemma}
\label{l:lem1}
Fix $h>0$ and assume $p$, $q \in L^{\infty}(\R)$. If $z\in\sigma(\mathfrak{D})$, then
\be
\label{e:bound1}
|\Im z| \le \left(\|p\|_{\infty}\|q\|_{\infty}\right)^{1/2}.
\ee
\end{lemma}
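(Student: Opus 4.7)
The plan is to use an energy argument on a Bloch eigenfunction. By Theorem~\ref{t:spectrum}, $z\in\sigma(\mathfrak{D})$ coincides with $z\in\mathcal{S}$, so there exist $\xi\in(-\pi,\pi]$ and a nontrivial $\Phi\in L^2(\T,\C^2)$ with $\mathfrak{D}_\xi\Phi=z\Phi$; equivalently, $\Psi(x)=e^{i\xi x}\Phi(x)$ is a nontrivial bounded solution of $\mathfrak{D}\Psi=z\Psi$ on $\R$ with $|\psi_j(x)|^2=|\phi_j(x)|^2$ being $1$-periodic for $j=1,2$. Writing out \eqref{e:specprob1} component-wise gives
\begin{align*}
ih\,\psi_1' - iq\,\psi_2 &= z\psi_1, \\
-ih\,\psi_2' + ip\,\psi_1 &= z\psi_2.
\end{align*}

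Next I would multiply the first equation by $\overline{\psi_1}$ and the second by $\overline{\psi_2}$, then integrate over the period $[0,1]$. Integration by parts combined with the periodicity of $|\psi_j|^2$ makes the boundary terms vanish, and a short conjugation computation shows that $\int_0^1 ih\,\psi_1'\overline{\psi_1}\,dx$ and $\int_0^1 (-ih)\,\psi_2'\overline{\psi_2}\,dx$ are real. Taking imaginary parts of the resulting two identities therefore eliminates the $h$-dependent kinetic contributions and yields
\begin{equation*}
\Im z \cdot \|\psi_1\|_{L^2(0,1)}^2 = -\,\Re\!\int_0^1 q\,\psi_2\,\overline{\psi_1}\,dx, \qquad
\Im z \cdot \|\psi_2\|_{L^2(0,1)}^2 = \Re\!\int_0^1 p\,\psi_1\,\overline{\psi_2}\,dx.
\end{equation*}

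Applying $L^\infty$-bounds on $p,q$ together with Cauchy--Schwarz to each right-hand side gives
\begin{equation*}
|\Im z|\,\|\psi_1\|_{L^2(0,1)}^2 \le \|q\|_\infty\,\|\psi_1\|_{L^2(0,1)}\|\psi_2\|_{L^2(0,1)},
\end{equation*}
\begin{equation*}
|\Im z|\,\|\psi_2\|_{L^2(0,1)}^2 \le \|p\|_\infty\,\|\psi_1\|_{L^2(0,1)}\|\psi_2\|_{L^2(0,1)}.
\end{equation*}
Multiplying these two estimates and cancelling $\|\psi_1\|_{L^2(0,1)}^2\|\psi_2\|_{L^2(0,1)}^2$ produces the desired bound \eqref{e:bound1}.

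The only delicate point, and probably the main obstacle, is justifying the cancellation when one component of the Bloch eigenfunction vanishes identically in $L^2(0,1)$. I would handle this as a separate degenerate case: if, say, $\psi_1\equiv 0$, the first ODE forces $q\psi_2=0$ a.e.\ and the second reduces to $-ih\,\psi_2'=z\psi_2$, whose only Bloch-type (bounded) solutions require $z\in\R$, so $\Im z=0$ and \eqref{e:bound1} holds trivially; the symmetric argument handles $\psi_2\equiv 0$. Note that the estimate \eqref{e:bound1} is independent of $h$, which is consistent with the observation that $h$ appears only in the real "kinetic" integrals that drop out upon taking imaginary parts.
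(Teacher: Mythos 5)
Your proof is correct and follows essentially the same route as the paper: pass to a Bloch eigenfunction, take imaginary parts of the two component ``energy'' identities over one period (which kills the $h$-dependent kinetic terms), estimate each by Cauchy--Schwarz, and multiply the resulting inequalities. Your explicit treatment of the degenerate case where one component vanishes identically is a small point the paper's proof glosses over when cancelling $\|\phi_1\|_{L^2(\T)}\|\phi_2\|_{L^2(\T)}$, and your resolution of it is sound.
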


\begin{proof}
We begin by writing the spectral problem \eqref{e:specprob1} in component form as follows:
\bse
\label{e:sys1}
\begin{align} 
\label{e:comp1}
ih \psi_1'(x;z,h) &=  iq(x)\psi_2(x;z,h) + z\psi_1(x;z,h), \\
\label{e:comp2} 
ih \psi_2'(x;z,h) &= ip(x)\psi_1(x;z,h) - z\psi_2(x;z,h).
\end{align} 
\ese 
Since $z\in\sigma(\mathfrak{D})$ it follows from \eqref{e:boundedsoln} and Theorem~\ref{t:spectrum} that there exists a non-constant solution $\Psi(\cdot;z,h)$ of \eqref{e:sys1} such that $\Psi(1;z,h) = e^{i\xi}\Psi(0;z,h)$ for some $\xi\in(-\pi,\pi]$. Multiply \eqref{e:comp1} by $\overline{\psi}_1$, and integrate by parts to get
\be
\label{e:a}
i\int_0^1q\overline{\psi}_1\psi_2\,dx = -ih\int_0^1\psi_1\overline{\psi}_1'\,dx - z\int_0^1|\psi_1|^2\,dx.
\ee 
Similarly, multiply \eqref{e:comp1} by $\overline{\psi}_1$, take the complex conjugate, and integrate to get
\be
\label{e:b}
i\int_0^1\overline{q}\psi_1\overline{\psi}_2\,dx = ih\int_0^1\psi_1\overline{\psi}_1'\,dx + \overline{z}\int_0^1|\psi_1|^2\,dx.
\ee 
Adding \eqref{e:b} to \eqref{e:a} gives the identity
\be
\label{e:key1}
\int_0^1 q\overline{\psi}_1\psi_2+\overline{q}\psi_1\overline{\psi}_2\,dx = -2\Im z \int_0^1|\psi_1|^2\, dx.
\ee
Hence, by the triangle and H\"older inequalities it follows that
\begin{align*}
2|\Im z|\|\psi_1\|^2_{L^2(0,1)} &= \left| \int_0^1 q\overline{\psi}_1\psi_2+\overline{q}\psi_1\overline{\psi}_2\,dx \right| \\
&\le 2\|q\|_{\infty} \|\psi_1\|_{L^2(0,1)}\|\psi_2\|_{L^2(0,1)}.
\end{align*} 
Thus,
\be
\label{e:est1}
|\Im z|\|\psi_1\|_{L^2(0,1)} \le \|q\|_{\infty}\|\psi_2\|_{L^2(0,1)}.
\ee

Next, multiply \eqref{e:comp2} by $\overline{\psi}_2$, and integrate by parts to get
\be
\label{e:c}
i\int_0^1p\psi_1\overline{\psi}_2\,dx = -ih\int_0^1\psi_2\overline{\psi}_2'\,dx - z\int_0^1|\psi_2|^2\,dx.
\ee 
Similarly, multiply \eqref{e:comp2} by $\overline{\psi}_2$, take the complex conjugate, and integrate to get
\be
\label{e:d} 
i\int_0^1\overline{p}\overline{\psi}_1\psi_2\,dx = ih\int_0^1\psi_2\overline{\psi}_2'\,dx + \overline{z}\int_0^1|\psi_2|^2\,dx.
\ee 
Adding \eqref{e:d} to \eqref{e:c} gives the identity
\be
\label{e:key2} 
\int_0^1p\psi_1\overline{\psi}_2+\overline{p}\overline{\psi}_1\psi_2\,dx = 2\Im z\|\psi_2\|_{L^2(0,1)}^2. 
\ee
Hence, by the triangle and H\"older inequalities it follows that
\begin{align*}
2|\Im z|\|\psi_2\|_{L^2(0,1)}^2 &= \left| \int_0^1 p\psi_1\overline{\psi}_2+\overline{p}\overline{\psi}_1\psi_2\,dx \right| \\
&\le 2\|p\|_{\infty} \|\psi_1\|_{L^2(0,1)}\|\psi_2\|_{L^2(0,1)}. 
\end{align*} 
Thus,
\be
\label{e:est2}
|\Im z|\|\psi_2\|_{L^2(0,1)} \le \|p\|_{\infty}\|\psi_1\|_{L^2(0,1)}.
\ee
Hence, multiplying \eqref{e:est1} and \eqref{e:est2} gives the estimate
\be
|\Im z| \le \left(\|p\|_{\infty}\|q\|_{\infty}\right)^{1/2}.
\ee
\end{proof}

\begin{remark} 
Note that \eqref{e:key1} and \eqref{e:key2} give identities for the imaginary component of the spectral parameter, namely, 
\be
\Im z = -\frac{\Re \langle \psi_1,q\psi_2\rangle}{\langle \psi_1,\psi_1\rangle} = \frac{\Re \langle p\psi_1,\psi_2\rangle}{\langle \psi_2,\psi_2\rangle},
\ee 
respectively. Here $\langle \cdot,\cdot\rangle := \langle \cdot,\cdot\rangle_{L^2(0,1)}$ is the standard inner product associated with the Hilbert space $L^2(0,1)$.
\end{remark} 

\begin{remark}
Note that by applying a similar argument as above to the system \eqref{e:specprob1} one can easily obtain the estimate $|\Im z|\le \max(\|p\|_{\infty},\|q\|_{\infty})$. On the other hand, by considering the component form of \eqref{e:specprob1} we are able to obtain the sharper bound \eqref{e:bound1}.   
\end{remark}

Next, we obtain a second bound on the spectrum which depends explicitly on the semiclassical parameter $h$.

\begin{lemma}
\label{l:lem2}
Fix $h>0$ and assume $p$, $q\in AC_{\rm loc}(\R)$ with $p'$, $q'\in L^{\infty}(\R)$. If $z\in\sigma(\mathfrak{D})$, then
\be
|\Re z||\Im z| \le \frac{h}{2}\left(\|p'\|_{\infty}+\|q'\|_{\infty}\right) + \frac{1}{4}\|\overline{p}\overline{q}-pq\|_{\infty}.
\ee 
\end{lemma}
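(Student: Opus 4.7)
The plan is to mimic the structure of Lemma~\ref{l:lem1}: fix $z\in\sigma(\mathfrak{D})$, pass to the Bloch form \eqref{e:specprob}--\eqref{e:comp2}, and retain the identities \eqref{e:key1}--\eqref{e:key2} as the main tools. Those identities express $\Im z\,\|\phi_j\|_{L^2(\T)}^2$ via integrated cross terms in $\phi_1,\phi_2$, so to capture $\Re z$ as a multiplicative factor I will produce a \emph{second} pair of identities by integrating a divergence-form expression. Concretely, I differentiate the flux-like quantity $p(x)\phi_1(x)\overline{\phi}_2(x)$, substitute $\phi_1'$ and $\overline{\phi}_2'$ from the ODE system, and observe that the two occurrences of the Bloch frequency $h\xi$ cancel while $-(z+\overline{z}) = -2\Re z$ survives, yielding
\[
\frac{d}{dx}\bigl(p\phi_1\overline{\phi}_2\bigr) = p'\phi_1\overline{\phi}_2 + \frac{pq}{h}|\phi_2|^2 + \frac{|p|^2}{h}|\phi_1|^2 - \frac{2ip\,\Re z}{h}\,\phi_1\overline{\phi}_2.
\]
Integration over $[0,1]$ annihilates the left side by $1$-periodicity. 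Subtracting the complex-conjugate identity, obtained from $\tfrac{d}{dx}(\overline{p}\overline{\phi}_1\phi_2)$, cancels the $|p|^2|\phi_1|^2$ contribution, converts $\int_0^1(p\phi_1\overline{\phi}_2+\overline{p}\overline{\phi}_1\phi_2)\,dx$ into $2\,\Im z\,\|\phi_2\|_{L^2(\T)}^2$ via \eqref{e:key2}, and reduces $pq - \overline{p}\overline{q}$ to $2i\,\Im(pq)$. The outcome is the key identity
\[
2\,\Re z\,\Im z\,\|\phi_2\|_{L^2(\T)}^2 = h\int_0^1 \Im\bigl(p'\phi_1\overline{\phi}_2\bigr)\,dx + \int_0^1 \Im(pq)\,|\phi_2|^2\,dx.
\]
A symmetric calculation that starts from $\tfrac{d}{dx}(q\overline{\phi}_1\phi_2)$ and invokes \eqref{e:key1} in place of \eqref{e:key2} produces the companion
\[
2\,\Re z\,\Im z\,\|\phi_1\|_{L^2(\T)}^2 = h\int_0^1 \Im\bigl(q'\overline{\phi}_1\phi_2\bigr)\,dx + \int_0^1 \Im(pq)\,|\phi_1|^2\,dx.
\]

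To finish, I add the two identities, take absolute values, and apply the triangle and H\"older inequalities with $\int_0^1 |\phi_1\phi_2|\,dx \le \|\phi_1\|_{L^2(\T)}\|\phi_2\|_{L^2(\T)}$. Using $\overline{p}\overline{q}-pq = -2i\,\Im(pq)$ to rewrite $\|\Im(pq)\|_\infty = \tfrac12\|\overline{p}\overline{q}-pq\|_\infty$, then the elementary inequality $\|\phi_1\|\|\phi_2\| \le \|\phi_1\|^2 + \|\phi_2\|^2$ to absorb the cross product, and dividing by $\|\phi_1\|_{L^2(\T)}^2 + \|\phi_2\|_{L^2(\T)}^2 > 0$ (positive since $\Phi\not\equiv 0$), produces the stated bound.

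The main obstacle is spotting the correct divergence-form quantity $p\phi_1\overline{\phi}_2$ (and its $q$-analog): it must simultaneously average to zero over one period, produce $\Re z$ as a coefficient (via the cancellation $-(h\xi+z)+(h\xi-\overline{z})=-2\Re z$), and return, upon subtracting the conjugate identity, precisely the cross combination already controlled by \eqref{e:key2}. Once this identity is in hand, the remainder is parallel bookkeeping to Lemma~\ref{l:lem1}.
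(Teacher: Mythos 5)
Your proof is correct, and it reaches the paper's central identities by a genuinely different (and arguably cleaner) route. The paper derives \eqref{e:esteq1}--\eqref{e:esteq2} by pairing \eqref{e:eq1}, \eqref{e:eq2} against $\overline{\phi}_1'$, $\overline{\phi}_2'$, integrating by parts, adding the complex-conjugate relations, and then substituting the ODE system back in to eliminate the derivative terms; you instead integrate the exact derivatives of the flux quantities $p\phi_1\overline{\phi}_2$ and $q\overline{\phi}_1\phi_2$ over one period (so the left side vanishes by periodicity), subtract the conjugate relations, and invoke \eqref{e:key1}--\eqref{e:key2}. I checked the cancellation of $h\xi$ and the coefficient $-2\Re z$, and your two displayed identities are exactly the imaginary-part forms of \eqref{e:esteq1} and \eqref{e:esteq2}, so from that point on the two arguments coincide: triangle and H\"older inequalities, the crude bound $\|\phi_1\|_{L^2(\T)}\|\phi_2\|_{L^2(\T)}\le\|\phi_1\|_{L^2(\T)}^2+\|\phi_2\|_{L^2(\T)}^2$, and division by $\|\Phi\|_{L^2(\T,\C^2)}^2>0$ reproduce the stated constants exactly (the paper adds the two estimates \eqref{e:estimate1}--\eqref{e:estimate2} before absorbing the cross term, you add the identities first; this is immaterial). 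What your route buys is economy: no integration by parts against $\overline{\phi}_j'$ and no back-substitution, just a divergence identity plus periodicity. A minor aside: with the sharp Young inequality $\|\phi_1\|\|\phi_2\|\le\tfrac12\|\Phi\|^2$ either argument would improve the first term to $\tfrac{h}{4}(\|p'\|_{\infty}+\|q'\|_{\infty})$, but since you only claim the lemma as stated, using the weaker inequality is perfectly fine.
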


\begin{proof}
Again, we begin by writing the spectral problem \eqref{e:specprob1} in component form as follows:
\bse
\label{e:eqs}
\begin{align} 
\label{e:eq1}
ih \psi_1'(x;z,h) &=  iq(x)\psi_2(x;z,h) + z\psi_1(x;z,h), \\
\label{e:eq2} 
ih \psi_2'(x;z,h) &= ip(x)\psi_1(x;z,h) - z\psi_2(x;z,h).
\end{align} 
\ese 
Since $z\in\sigma(\mathfrak{D})$ it follows from \eqref{e:boundedsoln} and Theorem \ref{t:spectrum} that there exists a non-constant solution $\Psi(\cdot;z,h)$ of \eqref{e:sys1} such that $\Psi(1;z,h) = e^{i\xi}\Psi(0;z,h)$ for some $\xi\in(-\pi,\pi]$. First, multiply \eqref{e:eq1} by $\overline{\psi}_1'$, and \eqref{e:eq2} by $\overline{\psi}_2'$, and integrate by parts to get
\bse
\begin{align}
\label{e:aa}
ih\int_0^1|\psi_1'|^2\,dx &= -i\int_0^1 (q'\psi_2+q\psi_2')\overline{\psi}_1\,dx + z\int_0^1\psi_1\overline{\psi}_1'\,dx, \\
\label{e:bb}
ih\int_0^1|\psi_2'|^2\,dx &= -i\int_0^1 (p'\psi_1+p\psi_1')\overline{\psi}_2\,dx- z\int_0^1\psi_2\overline{\psi}_2'\,dx.
\end{align}
\ese
Next, multiply \eqref{e:eq1} by $\overline{\psi}_1'$, and \eqref{e:eq2} by $\overline{\psi}_2'$, take the complex conjugate, and integrate by parts to get
\bse
\begin{align}
\label{e:cc}
-ih\int_0^1|\psi_1'|^2\,dx &= i\int_0^1 (\overline{q}'\overline{\psi}_2 + \overline{q}\overline{\psi}_2')\psi_1\,dx - \overline{z}\int_0^1\overline{\psi}_1'\psi_1\,dx, \\
\label{e:dd} 
-ih\int_0^1|\psi_2'|^2\,dx &= i\int_0^1 (\overline{p}'\overline{\psi}_1 + \overline{p}\overline{\psi}_1')\psi_2\,dx + \overline{z}\int_0^1\overline{\psi}_2'\psi_2\,dx.
\end{align}
\ese 
Add \eqref{e:cc} to \eqref{e:aa}, add \eqref{e:dd} to \eqref{e:bb}, and multiply by $h$ to obtain
\bse
\begin{align} 
\label{e:master1}
h\int_0^1 (q'\psi_2\overline{\psi}_1-\overline{q}'\overline{\psi}_2\psi_1)\,dx &= h\int_0^1 (\overline{q}\overline{\psi}_2'\psi_1-q\psi_2'\overline{\psi}_1)\,dx + 2h\Im z\int_0^1\psi_1\overline{\psi}_1'\,dx, \\
\label{e:master2}
h\int_0^1 (p'\psi_1\overline{\psi}_2-\overline{p}'\overline{\psi}_1\psi_2)\,dx &= h\int_0^1 (\overline{p}\overline{\psi}_1'\psi_2 - p\psi_1'\overline{\psi}_2)\,dx -2h\Im z\int_0^1\psi_2\overline{\psi}_2'\,dx.
\end{align}
\ese 
Finally, substituting \eqref{e:eqs} into \eqref{e:master1} gives
\begin{align*}
&h\int_0^1 \overline{q}\overline{\psi}_2'\psi_1-q\psi_2'\overline{\psi}_1\,dx \\
&=\int_0^1\overline{q}\left\{\overline{p}\overline{\psi}_1-i\overline{z}\overline{\psi}_2\right\}\psi_1 - q\left\{p\psi_1+iz\psi_2\right\}\overline{\psi}_1\,dx \\
&= \int_0^1 (\overline{p}\overline{q}-pq)|\psi_1|^2\,dx -i\Re z\int_0^1q\overline{\psi}_1\psi_2 + \overline{q}\psi_1\overline{\psi}_2\,dx + \Im z\int_0^1q\overline{\psi}_1\psi_2-\overline{q}\psi_1\overline{\psi}_2\,dx,
\end{align*}
and
\begin{align*}
2h\Im z\int_0^1\psi_1\overline{\psi}_1'\,dx &= 2\Im z\int_0^1\psi_1\left\{\overline{q}\overline{\psi}_2+i\overline{z}\overline{\psi}_1\right\}\,dx \\
&= 2\Im z\int_0^1\overline{q}\psi_1\overline{\psi}_2\,dx + 2i(\Im z)\overline{z}\|\psi_1\|_{L^2(0,1)}^2.
\end{align*}
Using \eqref{e:key1} and simplifying then gives the identity
\be
\label{e:esteq1}
h\int_0^1 (q'\overline{\psi}_1\psi_2-\overline{q}'\psi_1\overline{\psi}_2)\,dx = 4i\Re z\Im z\|\psi_1\|_{L^2(0,1)}^2 + \int_0^1(\overline{p}\overline{q}-pq)|\psi_1|^2\,dx.
\ee 
Hence, applying the triangle and H\"older inequalities gives
\begin{align*}
4|\Re z||\Im z|\|\phi_1\|_{L^2(0,1)}^2 &= \left|h\int_0^1 q'\overline{\psi}_1\psi_2-\overline{q}'\psi_1\overline{\psi}_2\,dx - \int_0^1(\overline{p}\overline{q}-pq)|\psi_1|^2\,dx\right| \\
&\le
2h\|q'\|_{\infty}\int_0^1|\overline{\psi}_1\psi_2|\,dx + \|\overline{pq}-pq\|_{\infty}\|\psi_1\|_{L^2(0,1)}^2 \\
&\le 2h\|q'\|_{\infty}\|\psi_1\|_{L^2(0,1)}\|\psi_2\|_{L^2(0,1)} + \|\overline{p}\overline{q}-pq\|_{\infty}\|\psi_1\|_{L^2(0,1)}^2.
\end{align*}
Thus,
\be
\label{e:estimate1}
4|\Re z||\Im z|\|\phi_1\|_{L^2(0,1)}^2 \le 2h\|q'\|_{\infty}\|\psi_1\|_{L^2(0,1)}\|\psi_2\|_{L^2(0,1)} + \|\overline{p}\overline{q}-pq\|_{\infty}\|\psi_1\|_{L^2(0,1)}^2.
\ee

Similar to above we now substitute \eqref{e:eqs} into \eqref{e:master2} to obtain 
\begin{align*}
&h\int_0^1 \overline{p}\overline{\psi}_1'\psi_2 - p\psi_1'\overline{\psi}_2\,dx \\
&=\int_0^1\overline{p}\left\{\overline{q}\overline{\psi}_2+i\overline{z}\overline{\psi}_1\right\}\psi_2 - p\left\{q\psi_2-iz\psi_1\right\}\overline{\psi}_2\,dx \\
&= \int_0^1(\overline{p}\overline{q}-pq)|\psi_2|^2\,dx + i\Re z\int_0^1\overline{p}\overline{\psi}_1\psi_2+p\psi_1\overline{\psi}_2 + \Im z\int_0^1\overline{p}\overline{\psi}_1\psi_2-p\psi_1\overline{\psi}_2\,dx,
\end{align*}
and
\begin{align*}
2h\Im z\int_0^1\psi_2\overline{\psi}_2'\,dx &= 2\Im z\int_0^1\psi_2\{\overline{p}\overline{\psi}_1-i\overline{z}\overline{\psi}_2\}\,dx \\
&=2\Im z\int_0^1\overline{p}\overline{\psi}_1\psi_2 - 2i(\Im z)\overline{z}\|\psi_2\|_{L^2(0,1)}^2.
\end{align*}
Using \eqref{e:key2} and simplifying then gives the identity
\be
\label{e:esteq2} 
h\int_0^1 p'\psi_1\overline{\psi}_2-\overline{p}'\overline{\psi}_1\psi_2\,dx
= 4i\Re z\Im z\|\psi_2\|_{L^2(0,1)}^2 + \int_0^1(\overline{p}\overline{q}-pq)|\psi_2|^2\,dx.
\ee 
Hence, applying the triangle and H\"older inequalities gives
\begin{align*}
4|\Re z||\Im z|\|\psi_2\|_{L^2(0,1)}^2 &= \left|h\int_0^1 p'\psi_1\overline{\psi}_2-\overline{p}'\overline{\psi}_1\psi_2\,dx - \int_0^1(\overline{p}\overline{q}-pq)|\psi_2|^2\,dx\right| \\
&\le
2h\|p'\|_{\infty}\int_0^1|\psi_1\overline{\psi}_2|\,dx + \|\overline{pq}-pq\|_{\infty}\|\psi_2\|_{L^2(0,1)}^2 \\
&\le 
2h\|p'\|_{\infty}\|\psi_1\|_{L^2(0,1)}\|\psi_2\|_{L^2(0,1)} + \|\overline{p}\overline{q}-pq\|_{\infty}\|\psi_2\|_{L^2(0,1)}^2.
\end{align*}
Thus,
\be
\label{e:estimate2}
4|\Re z||\Im z|\|\psi_2\|_{L^2(0,1)}^2 \le 2h\|p'\|_{\infty}\|\psi_1\|_{L^2(0,1)}\|\psi_2\|_{L^2(0,1)} + \|\overline{p}\overline{q}-pq\|_{\infty}\|\psi_2\|_{L^2(0,1)}^2.
\ee

Finally, adding \eqref{e:estimate2} to \eqref{e:estimate1}, and applying Young's inequality gives
\begin{align*}
4|\Re z||\Im z|\|\Psi\|_{L^2(0,1)^2}^2 &\le 2h\left(\|p'\|_{\infty}+\|q'\|_{\infty}\right)\|\psi_1\|_{L^2(0,1)}\|\psi_2\|_{L^2(0,1)} + \|\overline{p}\overline{q}-pq\|_{\infty}\|\Psi\|_{L^2(0,1)^2}^2 \\
&\le 2h\left(\|p'\|_{\infty}+\|q'\|_{\infty}\right)\|\Psi\|_{L^2(0,1)^2}^2 + \|\overline{p}\overline{q}-pq\|_{\infty}\|\Psi\|_{L^2(0,1)^2}^2.
\end{align*}
Hence, we arrive at the estimate
\be
|\Re z||\Im z| \le \frac{h}{2}\left(\|p'\|_{\infty}+\|q'\|_{\infty}\right) + \frac{1}{4}\|\overline{p}\overline{q}-pq\|_{\infty}.
\ee
\end{proof}

Importantly, if the potential satisfies the symmetry $\overline{\det Q}=\det Q$, or equivalently, if $\overline{pq}=pq$, then we can obtain a sharper bound. In particular, we have the following estimate.
\begin{lemma}
\label{l:lem3}
Fix $h>0$ and assume $p$, $q\in AC_{\rm loc}(\R)$ with $p'$, $q'\in L^{\infty}(\R)$. Moreover, assume $\overline{p}\overline{q}=pq$. If $z\in\sigma(\mathfrak{D})$, then
\be
\label{e:lem3est}
|\Re z||\Im z| \le \frac{h}{2}\left(\|p'\|_{\infty}\|q'\|_{\infty}\right)^{1/2}.
\ee
\end{lemma}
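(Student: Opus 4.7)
The plan is to recycle the two key identities \eqref{e:esteq1} and \eqref{e:esteq2} established in the proof of Lemma~\ref{l:lem2}, namely
\begin{align*}
h\int_0^1 (q'\overline{\phi}_1\phi_2-\overline{q}'\phi_1\overline{\phi}_2)\,dx &= 4i\,\Re z\,\Im z\,\|\phi_1\|_{L^2(\T)}^2 + \int_0^1(\overline{p}\overline{q}-pq)|\phi_1|^2\,dx,\\
h\int_0^1 (p'\phi_1\overline{\phi}_2-\overline{p}'\overline{\phi}_1\phi_2)\,dx &= 4i\,\Re z\,\Im z\,\|\phi_2\|_{L^2(\T)}^2 + \int_0^1(\overline{p}\overline{q}-pq)|\phi_2|^2\,dx,
\end{align*}
and then invoke the extra hypothesis $\overline{p}\overline{q}=pq$, which causes the rightmost integrals on both lines to vanish identically. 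This is the whole point of the stronger hypothesis: the ``cross term'' that obstructed a product-type bound in Lemma~\ref{l:lem2} is now gone.

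Applying the triangle and H\"older inequalities to the two resulting identities yields the pair of bounds
\begin{align*}
2|\Re z||\Im z|\,\|\phi_1\|_{L^2(\T)}^2 &\le h\,\|q'\|_{\infty}\,\|\phi_1\|_{L^2(\T)}\|\phi_2\|_{L^2(\T)},\\
2|\Re z||\Im z|\,\|\phi_2\|_{L^2(\T)}^2 &\le h\,\|p'\|_{\infty}\,\|\phi_1\|_{L^2(\T)}\|\phi_2\|_{L^2(\T)}.
\end{align*}
In contrast with the proof of Lemma~\ref{l:lem2}, where these were summed to produce $\|p'\|_\infty+\|q'\|_\infty$, I would \emph{multiply} them, obtaining
\[
4|\Re z|^2|\Im z|^2\,\|\phi_1\|_{L^2(\T)}^2\|\phi_2\|_{L^2(\T)}^2 \le h^2\,\|p'\|_{\infty}\|q'\|_{\infty}\,\|\phi_1\|_{L^2(\T)}^2\|\phi_2\|_{L^2(\T)}^2.
\]
Provided both $\|\phi_1\|_{L^2(\T)}$ and $\|\phi_2\|_{L^2(\T)}$ are strictly positive, cancelling gives exactly \eqref{e:lem3est}.

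The main obstacle is the degenerate case in which one component of the Bloch eigenfunction vanishes identically, so that the cancellation step above fails. I would handle this separately using equations \eqref{e:eq1}--\eqref{e:eq2}: if $\phi_1\equiv 0$, then \eqref{e:eq2} reduces to $ih\phi_2'=(h\xi-z)\phi_2$, whose nontrivial $1$-periodic solutions force $(z-h\xi)/h\in 2\pi\Z$, hence $z\in\R$ and $\Im z=0$; the case $\phi_2\equiv 0$ is treated symmetrically using \eqref{e:eq1}. In either degenerate situation the bound \eqref{e:lem3est} holds trivially, completing the argument.
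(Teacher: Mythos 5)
Your proposal is correct and follows essentially the same route as the paper: with $\overline{p}\overline{q}=pq$ the residual terms in \eqref{e:estimate1}--\eqref{e:estimate2} vanish, and multiplying the resulting pair of inequalities and cancelling $\|\phi_1\|_{L^2(\T)}^2\|\phi_2\|_{L^2(\T)}^2$ gives \eqref{e:lem3est}. Your separate treatment of the degenerate case $\phi_1\equiv 0$ or $\phi_2\equiv 0$ (where the cancellation would fail) is a small extra care the paper leaves implicit, and your argument that such a case forces $\Im z=0$ is sound.
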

\begin{proof}
Since $\overline{p}\overline{q}=pq$ it follows that estimate \eqref{e:estimate1} reads
\be
\label{e:newest1}
4|\Re z||\Im z|\|\psi_1\|_{L^2(0,1)}^2 \le 2h\|q'\|_{\infty}\|\psi_1\|_{L^2(0,1)}\|\psi_2\|_{L^2(0,1)},
\ee
and estimate \eqref{e:estimate2} reads
\be
\label{e:newest2}
4|\Re z||\Im z|\|\psi_2\|_{L^2(0,1)}^2 \le 2h\|p'\|_{\infty}\|\psi_1\|_{L^2(0,1)}\|\psi_2\|_{L^2(0,1)}.
\ee
Thus, multiplying \eqref{e:newest1} and \eqref{e:newest2} gives 
\[
16|\Re z|^2|\Im z|^2\|\psi_1\|_{L^2(0,1)}^2\|\psi_2\|_{L^2(0,1)}^2 \le 4h^2\|p'\|_{\infty}\|q'\|_{\infty}\|\psi_1\|_{L^2(0,1)}^2\|\psi_2\|_{L^2(0,1)}^2.
\]
Hence, we have the estimate
\be 
|\Re z||\Im z| \le \frac{h}{2}\left(\|p'\|_{\infty}\|q'\|_{\infty}\right)^{1/2}.
\ee
\end{proof} 

\begin{remark} 
Note that the estimate in Lemma~\ref{l:lem3} is sharper than that of Lemma~\ref{l:lem2} due to the Arithmetic-Geometric Mean inequality, namely,
\be
\label{e:amgm}
(\|p'\|_{\infty}\|q'\|_{\infty})^{1/2} \le \frac{1}{2}\left(\|p'\|_{\infty} + \|q'\|_{\infty}\right).
\ee 
\end{remark} 

The above estimates bound $\sigma(\mathfrak{D})$ to certain closed subsets of the spectral plane. Moreover, when considered together, these estimates provide enclosure estimates on the spectrum as discussed in the next section. 

\section{Enclosure estimates and semiclassical confinement}
\label{s:main}

In this section we present several spectral enclosure estimates for the $L^2(\R)^2$-spectrum of the non-self-adjoint Dirac operator \eqref{e:dtype}. These estimates are relevant to problems in mathematical physics, nonlinear waves, and the numerical study of non-self-adjoint Dirac operators \cite{AKNS,BBEIM,CCLP,gesztesyholden2003}.

\begin{theorem}
\label{t:confine1}
Fix $h>0$ and assume $p$, $q\in AC_{\rm loc}(\R)$ with $p'$, $q'\in L^{\infty}(\R)$. Then
\[
\sigma(\mathfrak{D}) \subset \Lambda^{h}(p,q),
\] 
where
\be
\label{e:lamh}
\Lambda^{h}(p,q) := \Big\{z: |\Im z|\le \left(\|p\|_{\infty}\|q\|_{\infty}\right)^{1/2}\Big\}\, \cap\, \Big\{z: |\Im z||\Re z|\le C(h)\Big\},
\ee
and
\be
\label{e:C1}
C(h) := \frac{h}{2}\left(\|p'\|_{\infty} + \|q'\|_{\infty}\right) + \frac{1}{4}\|\overline{pq}-pq\|_{\infty}.
\ee 
\end{theorem} 

\begin{proof}
If $z\in\sigma(\mathfrak{D})$, then by Lemma~\ref{l:lem1} it follows that $|\Im z|\le \left(\|p\|_{\infty}\|q\|_{\infty}\right)^{1/2}$. Moreover, by Lemma~\ref{l:lem2} it follows that $|\Im z||\Re z|\le C(h)$. Hence, $z\in\Lambda^h(p,q)$. This completes the proof.
\end{proof}

Moreover, if the product of the potentials is real we are able to obtain a sharper enclosure estimate. 

\begin{theorem}
\label{t:confine2} 
Fix $h>0$ and assume $p$, $q\in AC_{\rm loc}(\R)$ with $p'$, $q'\in L^{\infty}(\R)$. Moreover, assume $\overline{p}\overline{q}=pq$. Then
\[
\sigma(\mathfrak{D}) \subset \widetilde{\Lambda}^{h}(p,q),
\]
where 
\be
\label{e:confineset2}
\widetilde{\Lambda}^{h}(p,q):= \Big\{z: |\Im z|\le \left(\|p\|_{\infty}\|q\|_{\infty}\right)^{1/2}\Big\}\, \cap\, \Big\{z:|\Re z||\Im z| \le c(h)\Big\},
\ee
and
\be
\label{e:C2} 
c(h):= \frac{h}{2}\left(\|p'\|_{\infty}\|q'\|_{\infty}\right)^{1/2}.
\ee 
\end{theorem}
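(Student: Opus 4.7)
The plan is to observe that this theorem is an immediate corollary of Lemma~\ref{l:lem1} and Lemma~\ref{l:lem3}, exactly parallel to how Theorem~\ref{t:confine1} was obtained from Lemma~\ref{l:lem1} and Lemma~\ref{l:lem2}. The only mild technicality is that Lemma~\ref{l:lem3} is stated in the squared form $|\Re z|^2|\Im z|^2 \le \tfrac{h^2}{4}\|p'\|_\infty \|q'\|_\infty$, whereas the definition of $c(h)$ in \eqref{e:C2} is written for the unsquared product $|\Re z||\Im z|$, so one must take a positive square root.

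Concretely, I would proceed as follows. Fix $z \in \sigma(\mathfrak{D})$. Since the hypotheses $p,q \in AC_{\rm loc}(\R)$ with $p',q' \in L^\infty(\R)$ imply $p,q \in L^\infty(\R)$ over one period, and since the bound in Lemma~\ref{l:lem1} only requires $p,q \in L^\infty(\R)$, one may apply Lemma~\ref{l:lem1} to obtain
\[
|\Im z| \le \|p\|_\infty^{1/2}\|q\|_\infty^{1/2},
\]
placing $z$ in the first set cut out in \eqref{e:confineset2}. Next, the added hypothesis $\overline{p}\overline{q} = pq$ is precisely the assumption of Lemma~\ref{l:lem3}, so its conclusion \eqref{e:lem3est} gives
\[
|\Re z|^2 |\Im z|^2 \le \frac{h^2}{4}\|p'\|_\infty \|q'\|_\infty.
\]
Taking the nonnegative square root of both sides yields
\[
|\Re z||\Im z| \le \frac{h}{2}\|p'\|_\infty^{1/2}\|q'\|_\infty^{1/2} = c(h),
\]
which places $z$ in the second set of \eqref{e:confineset2} as well. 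Hence $z \in \widetilde{\Lambda}^h(p,q)$, completing the argument.

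There is no real obstacle here; the substantive analytic work has already been done in the proofs of Lemmas~\ref{l:lem1} and~\ref{l:lem3}. The only thing worth flagging is that the constant $c(h)$ defined in \eqref{e:C2} is indeed sharper than $C(h)$ from \eqref{e:C1} when $\overline{p}\overline{q} = pq$: the $\|\overline{p}\overline{q} - pq\|_\infty$ term vanishes, and the remaining semiclassical contribution has been improved from the arithmetic mean $\tfrac{h}{2}(\|p'\|_\infty + \|q'\|_\infty)$ to the geometric mean $\tfrac{h}{2}\|p'\|_\infty^{1/2}\|q'\|_\infty^{1/2}$, consistent with the AM--GM inequality \eqref{e:amgm} noted in the remark after Lemma~\ref{l:lem3}.
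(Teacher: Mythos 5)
Your proposal is correct and follows the same route as the paper: apply Lemma~\ref{l:lem1} for the bound $|\Im z|\le \|p\|_\infty^{1/2}\|q\|_\infty^{1/2}$ and Lemma~\ref{l:lem3} for $|\Re z||\Im z|\le c(h)$, concluding $z\in\widetilde{\Lambda}^h(p,q)$. Your explicit square-root step and the remark that the hypotheses guarantee $p,q\in L^\infty(\R)$ are minor elaborations the paper leaves implicit, but the argument is identical in substance.
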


\begin{proof}
If $z\in\sigma(\mathfrak{D})$, then by Lemma~\ref{l:lem1} it follows that $|\Im z|\le \left(\|p\|_{\infty}\|q\|_{\infty}\right)^{1/2}$. Moreover, by Lemma~\ref{l:lem3} it follows that $|\Re z||\Im z|\le c(h)$. Hence, $z\in\widetilde{\Lambda}^h(p,q)$. This completes the proof.
\end{proof}

%%%%%%%%%%%%%%%%%%%%%%%%%%%%%%%%%%%%%%%%%%%%%%%%%%%%%%%%%%%%%%%%%%%%%%%%%%%%%%%%%%%%%%%%%%%%%%%%%%%%%%%%%%%%%%%%%%%%%%%%%%%%%%%%%%%%%%%%%%%%%%%%%%%%%%%%%%%%%%%%%%%%%%%%%%%%
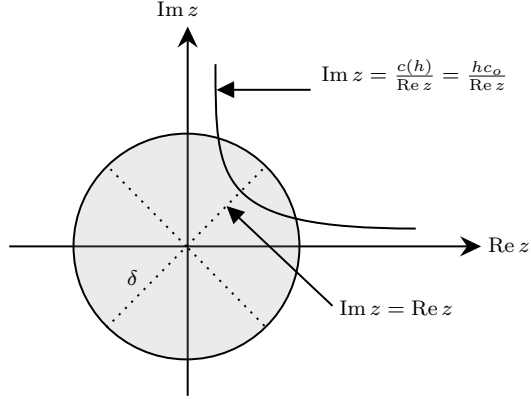
\begin{figure}[t!]
\begin{center}
\tikzset{every picture/.style={line width=0.75pt}}     
\begin{tikzpicture}[x=0.75pt,y=0.75pt,yscale=-1,xscale=1]
%Straight Lines [id:da8819216561596996] 
\draw    (191,180.59) -- (426,180.59);
\draw [shift={(429,180.6)}, rotate = 180.14] [fill={rgb, 255:red, 0; green, 0; blue, 0 }  ][line width=0.08]  [draw opacity=0] (10.72,-5.15) -- (0,0) -- (10.72,5.15) -- (7.12,0) -- cycle;
%Straight Lines [id:da6658602624726948] 
\draw    (280.98,256) -- (280.98,72.6);
\draw [shift={(281,69.6)}, rotate = 90.31] [fill={rgb, 255:red, 0; green, 0; blue, 0 }  ][line width=0.08]  [draw opacity=0] (10.72,-5.15) -- (0,0) -- (10.72,5.15) -- (7.12,0) -- cycle;
%Shape: Circle [id:dp04920534123023024] 
\draw  [fill={rgb, 255:red, 155; green, 155; blue, 155 }  ,fill opacity=0.2 ] (223.4,179.95) .. controls (223.79,148.49) and (249.61,123.3) .. (281.07,123.69) .. controls (312.53,124.08) and (337.72,149.9) .. (337.33,181.36) .. controls (336.94,212.82) and (311.12,238.01) .. (279.66,237.62) .. controls (248.19,237.23) and (223.01,211.41) .. (223.4,179.95) -- cycle;
%Straight Lines [id:da5122132729138584] 
\draw  [dash pattern={on 0.84pt off 2.51pt}]  (240,221.6) -- (280.36,180.65);
%Curve Lines [id:da6373345981796046] 
\draw [color={rgb, 255:red, 0; green, 0; blue, 0 }  ,draw opacity=1 ][line width=0.75]    (295,88.6) .. controls (295,148.6) and (295,171.6) .. (396,171.6);
%Straight Lines [id:da6410853723388128] 
\draw  [dash pattern={on 0.84pt off 2.51pt}]  (280.36,180.65) -- (320.72,139.71);
%Straight Lines [id:da15109557445550448] 
\draw  [dash pattern={on 0.84pt off 2.51pt}]  (280.36,180.65) -- (241,140.6);
%Straight Lines [id:da9779149531476268] 
\draw  [dash pattern={on 0.84pt off 2.51pt}]  (319.72,220.71) -- (280.36,180.65);
%Straight Lines [id:da5250386145569104] 
\draw    (343,101.56) -- (298,101.56);
\draw [shift={(295.5,101.6)}, rotate = 359.3] [fill={rgb, 255:red, 0; green, 0; blue, 0 }  ][line width=0.08]  [draw opacity=0] (8.93,-4.29) -- (0,0) -- (8.93,4.29) -- cycle;
%Straight Lines [id:da09401028878418116] 
\draw    (354,210.6) -- (302.73,162.24) ;
\draw [shift={(300.54,160.18)}, rotate = 43.32] [fill={rgb, 255:red, 0; green, 0; blue, 0 }  ][line width=0.08]  [draw opacity=0] (8.93,-4.29) -- (0,0) -- (8.93,4.29) -- cycle;
% Text Node
\draw (249,191.4) node [anchor=north west][inner sep=0.75pt]  [font=\footnotesize]  {$\delta$};
% Text Node
\draw (431,174.4) node [anchor=north west][inner sep=0.75pt]  [font=\footnotesize]  {$\text{Re}\,z$};
% Text Node
\draw (264,55.4) node [anchor=north west][inner sep=0.75pt]  [font=\footnotesize]  {$\text{Im}\,z$};
% Text Node
\draw (347,84.4) node [anchor=north west][inner sep=0.75pt]  [font=\footnotesize]  {$\text{Im}\,z=\frac{c(h)}{\text{Re}\,z}=\frac{hc_o}{\text{Re}\,z}$};
% Text Node
\draw (356,206.4) node [anchor=north west][inner sep=0.75pt]  [font=\footnotesize]  {$\text{Im}\,z =\text{Re}\,z$};	
\end{tikzpicture}
\end{center}
\caption{A disk of radius $\delta$ centered at the origin of the spectral plane together with the curve defined by $G(h):= \{(\Re z, \Im z): \Re z>0,\,\, \Im z = hc_o/\Re z\}$.}
\label{f:cartoon}
\end{figure}
%%%%%%%%%%%%%%%%%%%%%%%%%%%%%%%%%%%%%%%%%%%%%%%%%%%%%%%%%%%%%%%%%%%%%%%%%%%%%%%%%%%%%%%%%%%%%%%%%%%%%%%%%%%%%%%%%%%%%%%%%%%%%%%%%%%%%%%%%%%%%%%%%%%%%%%%%%%%%%%%%%%%%%%%%%%%

Next, we turn to the enclosure estimate in the semiclassical limit. From \eqref{e:C1} we see that
\[
C(h) = \frac{1}{4}\|\overline{p}\overline{q}-pq\|_{\infty} + O(h) \quad {\rm as} \quad h\to 0^+.
\]
Thus, under the hypotheses of Theorem~\ref{t:confine1} we see that in the semiclassical limit the spectrum is confined to the closed hyperbolic region given by \eqref{e:lamh} with $h=0$. 
	
Importantly, Theorem~\eqref{t:confine2} provides a sufficient condition, namely $\overline{pq}=pq$, in order for the $L^2(\R)^2$-spectrum of \eqref{e:dtype} to confine to the real and imaginary axes in the semiclassical limit $h\to 0^+$ as we show next.

\begin{theorem}
Suppose $p$, $q$ do not depend of $h$.
Assume $p$, $q\in AC_{\rm loc}(\R)$ with $p'$, $q'\in L^{\infty}(\R)$. Moreover, assume $\overline{p}\overline{q}=pq$. Define the set
\be
\label{e:cross}
\Sigma(p,q) := \R \cup i\left[-(\|p\|_{\infty}\|q\|_{\infty})^{1/2},(\|p\|_{\infty}\|q\|_{\infty})^{1/2}\right].
\ee
Moreover, let $N_{\delta}(\Sigma)$ be a $\delta$-neighborhood of $\Sigma(p,q)$. Then for any $\delta>0$, there exists $h_o>0$ such that 
\be
\label{e:inclusion}
\sigma(\mathfrak{D}) \subset N_{\delta}(\Sigma),
\ee
for all $0<h<h_o$.
\end{theorem}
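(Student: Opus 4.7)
My plan is to read off the conclusion directly from the confinement region $\widetilde\Lambda^{h}(p,q)$ produced by Theorem~\ref{t:confine2}, by noting that as $h\to 0^+$ the hyperbolic branch $|\Re z|\,|\Im z|\le c(h)$ collapses onto the coordinate axes while the horizontal strip $|\Im z|\le(\|p\|_\infty\|q\|_\infty)^{1/2}$ is independent of $h$. The scheme is: apply the two confinement bounds, then split on the size of $|\Re z|$ relative to $\delta$ to show each $z\in\sigma(\mathfrak{D})$ lies within distance $\delta$ of either the imaginary segment or the real axis.

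Fix $\delta>0$ and let $z\in\sigma(\mathfrak{D})$. By the hypotheses, Theorem~\ref{t:confine2} applies, so
\[
|\Im z|\le (\|p\|_\infty\|q\|_\infty)^{1/2}
\qquad\text{and}\qquad
|\Re z|\,|\Im z|\le c(h)=\tfrac{h}{2}\|p'\|_\infty^{1/2}\|q'\|_\infty^{1/2}.
\]
In the trivial case $\|p'\|_\infty\|q'\|_\infty=0$ we get $|\Re z|\,|\Im z|=0$, so $z\in\Sigma(p,q)\subset N_\delta(\Sigma)$ for every $h>0$ and any $h_o$ works. Otherwise, choose
\[
h_o:=\frac{2\delta^{2}}{\|p'\|_\infty^{1/2}\|q'\|_\infty^{1/2}},
\]
so that $c(h)<\delta^{2}$ whenever $0<h<h_o$.

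Next I would split into two cases. If $|\Re z|\le \delta$, then setting $w:=i\,\Im z$ we have $w\in i[-(\|p\|_\infty\|q\|_\infty)^{1/2},(\|p\|_\infty\|q\|_\infty)^{1/2}]\subset\Sigma(p,q)$ by the first bound, and $|z-w|=|\Re z|\le\delta$, so $z\in N_\delta(\Sigma)$. If $|\Re z|>\delta$, then the second bound gives
\[
|\Im z|\le\frac{c(h)}{|\Re z|}<\frac{c(h)}{\delta}<\delta,
\]
so the distance from $z$ to the real axis (which lies in $\Sigma$) is $|\Im z|<\delta$, and again $z\in N_\delta(\Sigma)$. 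In either case the inclusion \eqref{e:inclusion} holds, completing the argument.

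There is no real obstacle here; the content is entirely contained in Theorem~\ref{t:confine2} and the observation $c(h)\to 0$. The only care required is the case split that handles the neighborhoods of the two intersecting axes separately, and a mild sanity check in the degenerate case $\|p'\|_\infty\|q'\|_\infty=0$. The figure described by $G(h)$ in Figure~\ref{f:cartoon} is precisely the geometric picture this argument makes rigorous: for $h<h_o$, the hyperbola exits the disk of radius $\delta$ only after $|\Im z|$ has already dropped below $\delta$.
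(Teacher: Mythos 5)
Your proof is correct and takes essentially the same route as the paper: both reduce the claim to Theorem~\ref{t:confine2} together with the observation that $c(h)\to 0$ as $h\to 0^+$, the paper quantifying this by noting that the hyperbola $G(h)$ is farthest from the axes where it meets the bisector (yielding $h_o=\delta^2/2c_o$), while you achieve the same thing with the case split on whether $|\Re z|\le\delta$ (yielding an $h_o$ twice as large, equally valid). Your separate treatment of the degenerate case $\|p'\|_{\infty}\|q'\|_{\infty}=0$ matches the paper's opening remark that if $p'$ or $q'$ vanishes a.e.\ then $\sigma(\mathfrak{D})\subset\Sigma(p,q)$.
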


\begin{proof}
The result is a trivial consequence of Theorem~\ref{t:confine2}. A proof is provided for completeness. If $p$, or $q$, are constant then $\sigma(\mathfrak{D})\subset \Sigma(p,q)\subset N_{\delta}(\Sigma)$. 	
Next, fix $\delta>0$.	
Due to the symmetry in \eqref{e:confineset2}, without loss of generality we assume $\Re z>0$, and $\Im z>0$. Define the curve
\be
\label{e:hyperbola}
G(h) := \left\{(\Re z, \Im z): \Re z>0,\,\, \Im z = \frac{hc_o}{\Re z}\right\},
\ee 
where $c_o:= \left(\|p'\|_{\infty}\|q'\|_{\infty}\right)^{1/2}/2>0$. It is easy to show that the distance between the hyperbola $G(h)$ and the boundary of the first quadrant is maximal at the unique point where $G(h)$ intersects with the bisector $\{(\Re z,\Im z): \Re z=\Im z\}$ (see Fig.~\ref{f:cartoon}). Hence, take $h_o= \delta^2/2c_o$. Then if $h<h_o$ at the point of intersection between $G(h)$ and the bisector we have
\[
2(\Im z)^2 = 2hc_o < 2h_oc_o = \delta^2. 
\]
Thus, $\sigma(\mathfrak{D})\subset \widetilde{\Lambda}^{h}(p,q) \subset N_{\delta}(\Sigma)$. This completes the proof.
\end{proof}

\begin{remark}
It is important to point out that the above results also apply to potentials decaying to zero as $x\to\pm\infty$. For simplicity, assume $p$, $q$ are continuously differentiable with $p'$, $q'\in L^{\infty}(\R)$. Moreover, assume $p$, $q \in L^{1,1}(\R)$, where $L^{1,1}(\R) := \{f \in L^1(\R): xf(x) \in L^1(\R)\}$. In this case the above results give confinement estimates for the set of isolated eigenvalues (discrete spectrum). Importantly, in the study of completely integrable nonlinear evolution equations the discrete spectrum parameterizes the so-called ``solitons''--localized traveling wave solutions which interact elastically~\cite{APT2004,NMPZ1984}. Thus it follows that Theorems~\ref{t:confine1}--\ref{t:confine2} provide constraints on the allowable soliton amplitudes and speeds in the solution dynamics. For example, in focusing NLS fast solitons must have sufficiently small amplitudes.  
\end{remark}

Finally, fix $h=1$ and let
\be
\label{e:free}
\mathfrak{D}_o:= i\begin{pmatrix} 1 & 0 \\ 0 & -1 \end{pmatrix}\p_x
\ee
denote the free (massless) Dirac operator. It is easy to see in this case that the $L^2(\R)^2$-spectrum is simply the real $z$-axis. Thus, the spectral enclosure estimates are determined according to the size (measured by the $L^{\infty}(\R)$ norm) and smoothness of perturbations from the free operator \eqref{e:free}. An interesting open question is whether the enclosure estimates can be improved by considering higher-order derivatives of $p$ and $q$.    

\section{Constant potential}
\label{s:example}

In this section, by considering a simple example, we show there exists a potential $Q$ 
of the Dirac operator \eqref{e:dtype} satisfying $\overline{pq}\ne pq$ and such that the spectrum does not confine to the real and imaginary axes of the spectral variable in the semiclassical limit $h\to 0^+$. To this end we consider the case of constant potentials, namely,
\be
\label{e:constpot}
h\Psi'(x;z,h)= \begin{pmatrix} -iz & q_o \\ p_o & iz \end{pmatrix}\Psi(x;z,h), \quad x\in\R,
\ee 
with $p_o\in\C$, and $q_o\in\C$, both constant, $h>0$, and $z\in\C$ the spectral parameter.

Since \eqref{e:constpot} is a $2\times 2$ linear first-order system of ODEs with constant coefficients one can easily compute the solution. In particular, the eigenvalues $\pm i\lambda/h$ corresponding to \eqref{e:constpot} are defined by the complex square root
\be
\lambda(z) = (z^2-\omega^2)^{1/2},
\ee
where $\omega^2 = p_oq_o$. Moreover, the corresponding eigenvector matrix can be written as
\be 
\label{e:eigenvec_mat}
P(z) = \begin{pmatrix} 1 & -\frac{i(z+\lambda)}{p_o} \\ \frac{i(z+\lambda)}{q_o} & 1 \end{pmatrix}.
\ee
Hence, a fundamental matrix solution of \eqref{e:constpot} is given by
\be
\label{e:Fmat}
F(x;z,h) = P(z)e^{i\lambda x\sigma_3/h} = \begin{pmatrix} e^{i\lambda x/h} & \frac{-i(z+\lambda)}{p_o}e^{-i\lambda x/h} \\ \frac{i(z+\lambda)}{q_o}e^{i\lambda x/h} & e^{-i\lambda x/h} \end{pmatrix},
\ee
and the corresponding monodromy matrix is given by
\be
M(z;h) = F^{-1}(0;z,h)F(1;z,h) = \begin{pmatrix} e^{i\lambda/h} & 0 \\ 0 & e^{-i\lambda/h} \end{pmatrix}. 
\ee

Finally, since all monodromy matrices are similar, and the trace of a matrix is invariant under similarity transformations it follows immediately that the Floquet discriminant is given by
\be
\Delta(z) = \cos(\lambda(z)/h),
\ee
where we fixed the period to one.

%%%%%%%%%%%%%%%%%%%%%%%%%%%%%%%%%%%%%%%%%%%%%%%%%%%%%%%%%%%%%%%%%%%%%%%%%%%%%%%%%%%%%%%%%%%%%%%%%%%%%%%%%%%%%%%%%%%%%%%%%%%
\begin{figure}[t!]
\centering {\includegraphics[width=6cm]{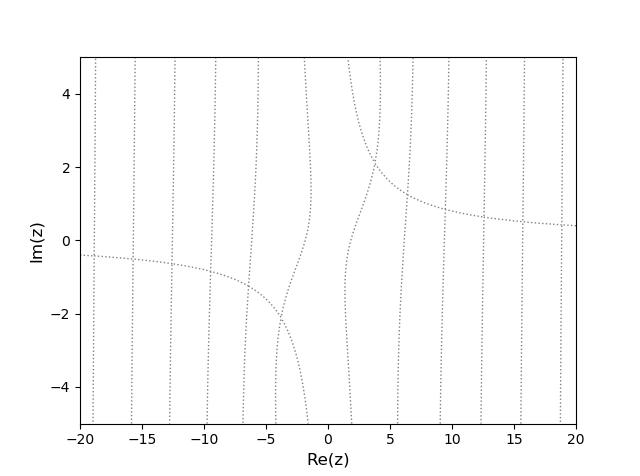}
\includegraphics[width=6cm]{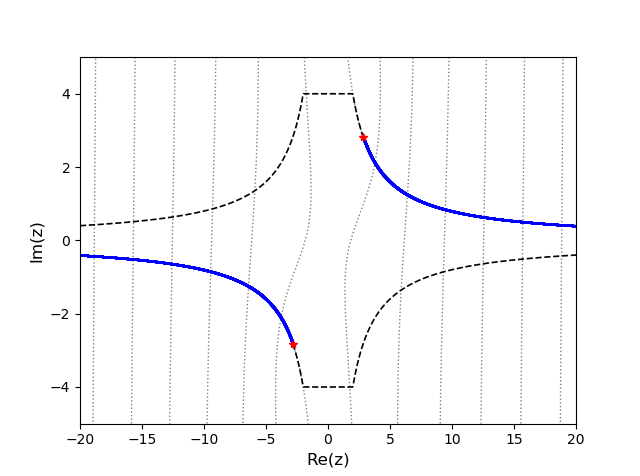}}
\caption{Spectrum of the Dirac operator \eqref{e:dtype} for constant potential with $p_o=1$, $q_o=i16$, and semiclassical parameter $h=1$. Left: Floquet discriminant contours $\Im\Delta=0$. Right: The spectrum, $\sigma(\mathfrak{D})$ (blue), branch points $\pm\omega=2\sqrt{2}(1+i)$ (red star), contours $\Im\Delta=0$ (black dotted), and the boundary of $\Lambda^{h}(p,q)$ (black dashed).}
\label{f:const}
\end{figure}
%%%%%%%%%%%%%%%%%%%%%%%%%%%%%%%%%%%%%%%%%%%%%%%%%%%%%%%%%%%%%%%%%%%%%%%%%%%%%%%%%%%%%%%%%%%%%%%%%%%%%%%%%%%%%%%%%%%%%%%%%%%

%%%%%%%%%%%%%%%%%%%%%%%%%%%%%%%%%%%%%%%%%%%%%%%%%%%%%%%%%%%%%%%%%%%%%%%%%%%%%%%%%%%%%%%%%%%%%%%%%%%%%%%%%%%%%%%%%%%%%%%%%%%
\begin{figure}[t!]
\centering {\includegraphics[width=6cm]{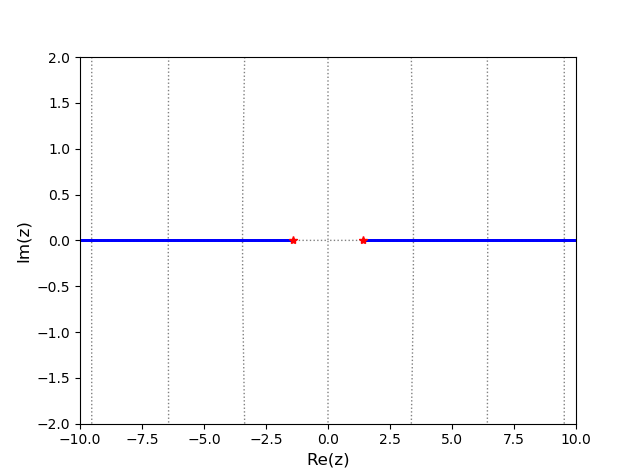}
\includegraphics[width=6cm]{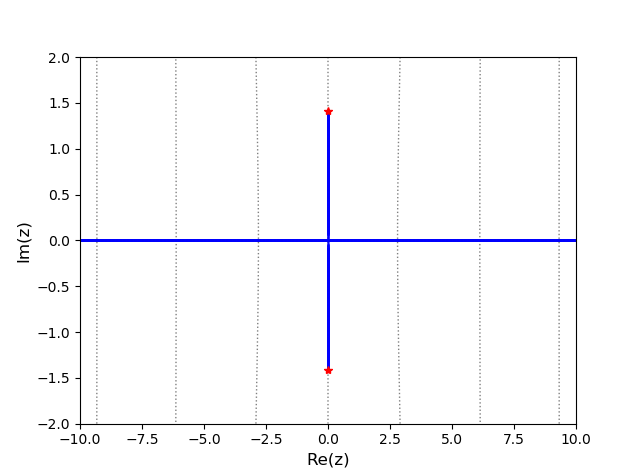}}
\caption{Spectrum of the Dirac operator \eqref{e:dtype} for constant potential and the reductions $p_o=\pm\overline{q_o}$. Left: For $p_o=1+i$, $q_o=1-i$ we plot the spectrum $\sigma(\mathfrak{D})$ (blue), branch points $\pm\sqrt{2}$ (red star), and contours $\Im\Delta=0$ (black dotted). Right: For $p_o=-1-i$, $q_o=1-i$ we plot the spectrum $\sigma(\mathfrak{D})$ (blue), branch points $\pm i\sqrt{2}$ (red star), and contours $\Im\Delta=0$ (black dotted).}
\label{f:constnls}
\end{figure}
%%%%%%%%%%%%%%%%%%%%%%%%%%%%%%%%%%%%%%%%%%%%%%%%%%%%%%%%%%%%%%%%%%%%%%%%%%%%%%%%%%%%%%%%%%%%%%%%%%%%%%%%%%%%%%%%%%%%%%%%%%%

Thus, in this case it follows from Theorem~\ref{t:spectrum} that 
\be
\label{e:spectrumconst}
\sigma(\mathfrak{D}) = \Big\{z\in\C: \cos(\lambda(z)/h) \in [-1,1]\Big\}.
\ee 
It follows that the spectrum does not tend to the real and imaginary axes as $h\to 0^+$. In fact for $p_o$, $q_o$ constant $\sigma(\mathfrak{D})$ remains fixed in the complex plane as $h\to 0^+$. For illustrative purposes we compute $\sigma(\mathfrak{D})$ when $p_o=1$, and $q_o=i16$ (see Fig.~\ref{f:const}). 

Finally, the following corollary follows easily. 

\begin{corollary}
\label{c:corone}
Assume $p$, $q\in AC_{\rm loc}(\R)$. If $p$, or $q$, are constant and $\overline{pq}=pq$, then
\be
\sigma(\mathfrak{D}) \subset \Sigma(p,q),
\ee
where $\Sigma(p,q)$ is defined by \eqref{e:cross}.
\end{corollary}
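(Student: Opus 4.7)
The argument is essentially a direct combination of Lemmas~\ref{l:lem1} and \ref{l:lem3}, specialized to the degenerate case where one of the derivatives vanishes. I would structure it in three short steps.

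First, I would observe that since $p$ and $q$ are $1$-periodic and locally absolutely continuous, they are continuous on the compact period $[0,1]$ and hence automatically lie in $L^\infty(\R)$. Consequently Lemma~\ref{l:lem1} applies and yields $|\Im z|^2 \le \|p\|_\infty \|q\|_\infty$ for every $z \in \sigma(\mathfrak{D})$.

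Second, the hypothesis $\overline{p}\overline{q} = pq$ is precisely what is needed to invoke Lemma~\ref{l:lem3}, which gives
\[
|\Re z|^2 \, |\Im z|^2 \;\le\; \frac{h^{2}}{4}\,\|p'\|_\infty \|q'\|_\infty.
\]
Since by assumption one of $p'$ or $q'$ equals zero almost everywhere, the right-hand side collapses to $0$, and we conclude $\Re z \cdot \Im z = 0$ for all $z \in \sigma(\mathfrak{D})$.

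Third, I would split into the two forced cases. If $\Im z = 0$, then $z \in \R$. If instead $\Re z = 0$, then combining with the bound from Lemma~\ref{l:lem1} yields $z \in i[-(\|p\|_\infty \|q\|_\infty)^{1/2},\,(\|p\|_\infty \|q\|_\infty)^{1/2}]$. Either way $z \in \Sigma(p,q)$ as defined in \eqref{e:cross}, completing the inclusion $\sigma(\mathfrak{D}) \subset \Sigma(p,q)$.

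The only real technicality is the applicability of Lemma~\ref{l:lem3}, which is stated under the assumption $p', q' \in L^\infty(\R)$. The factor that is zero a.e. trivially satisfies this; for the other, one can either add the hypothesis explicitly or, more carefully, go back to estimates \eqref{e:newest1}--\eqref{e:newest2} in the proof of Lemma~\ref{l:lem3} and observe that whichever of the two inequalities has the vanishing derivative on its right-hand side already forces $|\Re z|\cdot|\Im z|\,\|\phi_j\|_{L^2(\T)}^{2} = 0$; a routine check that the corresponding component $\phi_j$ cannot be identically zero for a nontrivial Bloch eigenfunction then closes the gap. This bookkeeping is the only place where any thought is required; the rest is immediate.
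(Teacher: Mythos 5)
Your argument is correct and is essentially the paper's own proof, which simply cites Theorem~\ref{t:confine2} (itself exactly the combination of Lemmas~\ref{l:lem1} and~\ref{l:lem3} that you spell out), with $c(h)=0$ forcing $\Re z\cdot\Im z=0$ and Lemma~\ref{l:lem1} then confining the purely imaginary case. Your closing remark is a fair catch --- the corollary as stated omits the hypothesis $p',q'\in L^{\infty}(\R)$ required by Lemma~\ref{l:lem3} --- and your fallback via \eqref{e:newest1}--\eqref{e:newest2} works, noting that the alternative $\phi_1\equiv 0$ (or $\phi_2\equiv 0$) forces $z$ to be real anyway, so the conclusion survives in that degenerate case as well.
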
 

\begin{proof}
This result follows immediately from Theorem~\ref{t:confine2}.
\end{proof}

As an illustration of Corollary \ref{c:corone} if $p_o=\pm\overline{q_o}$, then 
\bse
\begin{align}
\label{e:dnls}
\sigma(\mathfrak{D})&= \big(-\infty,-|q_o|\big]\cup \big[|q_o|,\infty\big),
\\
\label{e:fnls}
\sigma(\mathfrak{D})&= \R\cup \big[-i|q_o|, i|q_o|\big],
\end{align} 
\ese
respectively (see Fig.~\ref{f:constnls}). Importantly, \eqref{e:dnls} and \eqref{e:fnls} correspond to the continuous spectrum for the defocusing and focusing NLS equations with symmetric non-zero boundary conditions, respectively. Hence, it follows that \eqref{e:spectrumconst} gives the continuous spectrum of the AKNS system with non-zero boundary conditions satisfying $p_{\pm}q_{\pm} = p_oq_o$ as $x\to\pm\infty$, respectively.

\section{Conclusions}

In this work we studied the non-self-adjoint Dirac operator \eqref{e:dtype} with a periodic potential. It was shown that for real and even, or real and odd, potentials the spectrum is symmetric about the real and imaginary axes of the spectral variable. Further, several bounds on the spectrum were obtained for a large class of potentials. The intersection of these bounds provide spectral enclosure estimates. Importantly, it was shown if the determinant of the potential is real, i.e., $\overline{p}\overline{q}=pq$, then no point off the cross \eqref{e:cross} can be in the spectrum if the semiclassical parameter is sufficiently small. Thus, a sufficient condition for semiclassical spectral confinement was obtained.

The results in this work are particularly important for the analysis of the non-self-adjoint Dirac operator \eqref{e:dtype} in the semiclassical limit $h\to 0^+$ using, for example, WKB type methods. The results of this work are also relevant to the study of \eqref{e:dtype} in the case of algebro-geometric potentials, as well as to the numerical computation of the spectrum. An interesting open question is whether the condition $\overline{p}\overline{q}=pq$ is also necessary for semiclassical spectral confinement to the cross \eqref{e:cross}. Finally, another interesting open question is whether one can improve upon the spectral enclosure estimates by restricting to the class of periodic potentials that are $k$-times continuously differentiable with $k\ge 2$. 

\vspace{5mm}

\noindent {\bf Acknowledgments.} The author thanks Gino Biondini, Mathew Johnson, and Dionyssios Mantzavinos for many interesting discussions and several helpful suggestions.

%%%%%%%%%%%%%%%%%%%%%%%%%%%%%%%%%%%%%%%%%%%%%%%%%%%%%%%%%%%%%%%%%%%%%%%%%%%%%%%%%%%%%%%%%%%%%%%%%%%%%%%%%%%%%%%%%%%%%%%%%%
\makeatletter
\makeatother
\def\title#1,{\textit{#1}}
\def\reftitle#1,{``#1''}

\end{document}